\newtheorem{lemma}{Lemma}
\newtheorem{theorem}[lemma]{Theorem}
\newtheorem{proposition}[lemma]{Proposition}
\newtheorem{obs}[lemma]{Observation}
\newtheorem{corollary}[lemma]{Corollary}
\theoremstyle{definition}
\newtheorem{definition}[lemma]{Definition}
\title{On the Characteristic Foliation on a Smooth Hypersurface in a Holomorphic Symplectic Fourfold}
\author{E. Amerik, L. Guseva}
\date{}
\begin{document}
\maketitle
\section{Introduction}

\begin{definition} A holomorphic symplectic manifold is a simply-connected compact K{\"a}hler 
manifold $X$ with a nowhere degenerate global holomorphic two-form $\omega$.
A holomorphic symplectic manifold is irreducible if in addition $H^0(X, \Omega^2)$ is spanned by
such an $\omega$.
\end{definition}

This terminology is explained by Bogomolov decomposition theorem which states that, up to a finite
\'etale covering, each holomorphic symplectic manifold is a product of several irreducible ones
and a torus.

Let X be a holomorphic symplectic manifold with a holomorphic symplectic form $\omega.$ 
Let $D$ be a smooth divisor on $X.$ At each point of $D$, the restriction of $\omega$ to $D$ has one-dimensional kernel. This gives a non-singular foliation $\mathcal{F}$ on $D$, called the characteristic foliation. 
Hwang and Viehweg in \cite{HV} have shown that if $X$ is projective and $D$ is of general type, then 
$\mathcal{F}$ cannot be algebraic (unless in the trivial case when $X$ is a surface, $D$ is a curve,
so $\mathcal{F}$ has a single leaf equal to $D$; recall that a foliation in curves on a compact
K\"ahler manifold is called algebraic when all
its leaves are compact complex curves). This result has been extended by Amerik and Campana 
in \cite{AC} to the case when $D$ is not necessarily of general type; in particular,
when $X$ is irreducible, they proved that the characteristic foliation $\mathcal{F}$ on a nonsingular irreducible divisor $D$ is algebraic if and only if the leaves of $\mathcal{F}$ are rational curves or $X$ is a surface.

Suppose now that $X$ is an irreducible holomorphic symplectic fourfold and $D$, $\mathcal{F}$
are as above. It is easy to
give an example when the Zariski closure of a general leaf of $\mathcal{F}$ is a surface. Indeed
such is the case when $X$ has a Lagrangian fibration $f: X\to Z$ and $D$ is the preimage of a general curve $C\subset Z.$ The leaves of $\mathcal{F}$ are contained in the fibers of $f$; note that the 
general
fiber is a torus by the Arnold-Liouville theorem. 
The aim of this paper is to prove that all examples where the Zariski closure of a general leaf
is two-dimensional are obtained in this way. 
 
\begin{theorem} \label{30}
Let $X$ be an irreducible holomorphically symplectic 4-dimensional manifold and let $D$ be an irreducible smooth divisor on X. Suppose that a general leaf of the characteristic foliation $\mathcal{F}$ on $D$ is non-compact but there exists a rational fibration $p: D\dasharrow C$ by surfaces such that every leaf of $\mathcal{F}$ is contained in the closure of some fiber of $p.$ Then there exists an almost holomorphic lagrangian fibration
$f: X\dasharrow B$ extending $p$. In particular the general fiber of $p$ is a torus.
\end{theorem}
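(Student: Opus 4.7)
The plan is to show that the general fiber $S$ of $p$ is a Lagrangian torus in $X$ and then to construct the Lagrangian fibration by deforming $S$ as a Lagrangian submanifold. First I would prove that $S$ is Lagrangian. Since $\omega|_{T_xD}$ is a skew form on the three-dimensional space $T_xD$, its kernel has dimension one and equals $T_xD\cap T_xD^{\perp}=T_xD^{\perp}$; hence $T_{\mathcal{F}}=T_D^{\perp}$. Because the leaves of $\mathcal{F}$ lie in fibers of $p$, $T_{\mathcal{F}}|_S\subset T_S$, and for $v\in T_{\mathcal{F}}$ and any $w\in T_S\subset T_D$ one has $\omega(v,w)=0$. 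Thus $\omega|_{T_S}$ is a $2$-form on a $2$-dimensional space with a nontrivial kernel, so it vanishes identically and $S$ is Lagrangian.

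Next I would analyse the normal bundle of $S$. The Lagrangian condition yields $N_{S/X}\cong \Omega^1_S$ via $\omega$, and since $S$ is a fiber of a morphism (after resolving $p$) its normal bundle inside $D$ is trivial. The short exact sequence
\[
0\to N_{S/D}\to N_{S/X}\to N_{D/X}|_S\to 0
\]
becomes
\[
0\to \mathcal{O}_S\to \Omega^1_S\to N_{D/X}|_S\to 0,
\]
producing a nowhere vanishing holomorphic $1$-form $\alpha$ on $S$ whose kernel at each point is $T_{\mathcal{F}}$. Using the hypothesis that a general leaf of $\mathcal{F}$ is non-compact, together with the observation that its Zariski closure must then be the whole of $S$ (since otherwise the leaf would be contained in an algebraic curve inside $S$ and hence be compact, the foliation being non-singular), I would rule out $h^0(\Omega^1_S)=1$: otherwise $\alpha$ would be the pullback of a nowhere vanishing $1$-form from an Albanese elliptic curve, so the leaves of $\mathcal{F}|_S=\ker\alpha$ would be compact fibers of the Albanese map, contradicting Zariski density. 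Hence $h^0(\Omega^1_S)\geq 2$, and combined with a classification of compact K\"ahler surfaces carrying a nowhere vanishing holomorphic $1$-form one concludes that $S$ is a complex $2$-torus; this proves the last assertion of the theorem.

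Finally I would invoke Voisin's theorem on unobstructed deformations of compact Lagrangian submanifolds of a hyperk\"ahler manifold: the Kuranishi space of $S\subset X$ is smooth of dimension $h^0(\Omega^1_S)=2$. The family $\{p^{-1}(c)\}_{c\in C}$ contributes a one-dimensional subfamily, and the second direction of deformation necessarily transports $S$ outside $D$ (since deformations of $S$ inside $D$ are at most one-dimensional). Because $S$ is a torus, every nonzero holomorphic $1$-form on $S$ is nowhere vanishing, so the difference of two nearby infinitesimal deformations never vanishes and nearby members of the family are pairwise disjoint. The evaluation map from the universal family over the Kuranishi base is therefore birational onto $X$, yielding the desired almost holomorphic Lagrangian fibration $f\colon X\dasharrow B$; by construction the subfamily parametrized by $C\subset B$ sweeps out exactly $D$, so $f|_D=p$.

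The main difficulty will be the middle step, namely showing that $S$ is actually a torus rather than merely birational to one. Ruling out $h^{1,0}(S)=1$ by the leaf-density argument is direct, but passing from $h^{1,0}(S)\geq 2$ to $S\cong \mathrm{Alb}(S)$ requires controlling the canonical bundle via the identification $K_S=N_{D/X}|_S$ and excluding foliation-invariant exceptional curves, since the non-singular foliation $\mathcal{F}|_S$ cannot have a $(-1)$-curve as a leaf. A related subtle point, needed in the final step, is the nowhere vanishing of every nonzero element of $H^0(\Omega^1_S)$, without which nearby deformations could intersect and the fibration structure would fail to globalize.
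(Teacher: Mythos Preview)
Your central step is wrong: it is not true that a smooth projective surface $S$ with $h^{1,0}(S)\ge 2$ and a nowhere vanishing holomorphic $1$-form $\alpha$ whose kernel foliation has Zariski-dense leaves must be a torus (or even birational to one). Here is a counterexample. Let $C$ be a curve of genus $g\ge 2$, $E$ an elliptic curve, and $\rho:\pi_1(C)\to E$ a homomorphism with dense image (translations). The associated flat $E$-bundle $\pi:S\to C$ is a smooth properly elliptic surface with $c_2(S)=0$, trivial fundamental line bundle, $p_g(S)=g$, and $q(S)=g+1\ge 3$. The splitting $T_S=T_{\mathcal F}\oplus T_{\mathcal G}$ given by the flat connection has $T_{\mathcal G}\cong\mathcal O$, so the projection onto $T_{\mathcal G}$ is a nowhere vanishing $1$-form with kernel $T_{\mathcal F}$; the leaves of $\mathcal F$ are the horizontal lifts of $C$, dense because the monodromy is dense. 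These surfaces satisfy every hypothesis you invoke yet have $\kappa(S)=1$. In fact this is exactly the class of surfaces the paper is forced to confront, and it rules them out not by any argument intrinsic to $S$ but by analysing the whole family $M$ of Lagrangian deformations of $S$ in $X$: the structure of $\Omega^1_{S'}$ for nearby $S'$, the geometry of pairwise intersections of members of $M$, and separate contradictions in the cases $g(C)>1$ and $g(C)=1$ coming from the symplectic geometry of $X$. No classification-of-surfaces argument internal to $S$ can replace this.

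A secondary gap: you assume $S$ is smooth and that $N_{S/D}\cong\mathcal O_S$, saying ``after resolving $p$''. But a priori the general fibre of $p$ passes through the base locus of $p$ and may be singular there; resolving $p$ replaces $S$ by a surface inside a blow-up of $D$, not inside $X$, so your normal-bundle sequence in $X$ is no longer available. The paper handles this by first proving (via Brunella's results on foliated surfaces) that $p$ is in fact regular, hence $S$ is smooth. Finally, once one does know $S$ is a torus, the existence of the almost holomorphic Lagrangian fibration is already a theorem in the literature (Greb--Lehn--Rollenske, Amerik, Hwang--Weiss), so your last step is unnecessary as an independent argument.
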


Greb, Lehn and Rollenske in \cite{GLR} in the non-projective case, Amerik in \cite{A} in the 4-dimensional case and Hwang and Weiss in \cite{HW} in the general case proved that 
any Lagrangian torus in an irreducible holomorphically symplectic manifold is a fibre of an almost holomorphic Lagrangian fibration. The general fiber of $p$ is clearly
lagrangian, since the tangent space to $S$ at a general point is contained in the
tangent space to $D$ and contains the kernel of the restriction of the symplectic 
form to $D$.  Therefore to prove theorem \ref{30} it 
suffices to show that the general fiber $S$ of $p$ is a torus.

Our argument proceeds as follows. In the next section, using the classification of 
surfaces and Brunella's results on foliations on surfaces, we reduce the problem to 
the case when $S$ is an elliptic fibration with some special 
properties. Then we remark that such an $S$ deforms away from $D$ and its general deformation has similar 
properties. Finally, in the last section we produce a deformation-theoretic 
argument leading to the conclusion.

\section{Smooth foliations on surfaces and the geometry of $S$}

We assume that the closure of a sufficiently general leaf of the characteristic 
foliation $\mathcal{F}$ is a surface. There is then a family of such surfaces which
rationally fibers $D$; we denote a general surface of the family by $S$ and the rational
fibration by $p:D\dasharrow B$. Apriori $p$ can have an indeterminacy locus and
$S$ can be singular along this locus. This indeterminacy locus is also the intersection
of general surfaces in the family (notice that if $p$ has indeterminacy then $B$ is 
a rational curve, since certain rational curves coming from the resolution of 
indeterminacy then dominate $B$; so $S$ is a general member of a linear system defining 
$p$, therefore the base locus of the linear system is the intersection of two such $S$).   

From the fact that $\mathcal{F}$ is a 
smooth foliation
(i.e. in the neighbourhood of each point $D$ looks like a product with a curve)
given by the $\omega$-orthogonal to $D$ we deduce an obvious
\begin{obs}\label{lagr}
$S$ is a lagrangian surface, the indeterminacy  locus of $p$ is a union of leaves
of $\mathcal{F}$, and $\mathcal{F}$ induces a smooth foliation on the normalization
of $S$.
\end{obs}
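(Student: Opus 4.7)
The plan is to reduce all three statements to the local product model of a nonsingular foliation. Since $\mathcal{F}$ is smooth on $D$, around any point of $D$ one can choose analytic coordinates in which $D$ becomes $U\times\Delta$ (with $U$ a bidisc and $\Delta$ a disc) and $\mathcal{F}$ becomes the foliation by the vertical discs $\{u\}\times\Delta$. I would verify each statement in such a chart and then globalise.

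For the Lagrangian property, I would use a dimension count. At a smooth point $s\in S$, the tangent space $T_{S,s}\subset T_{D,s}$ is two-dimensional, and since $S$ is the Zariski closure of a leaf, by a continuity argument (the locus where $T_\mathcal{F}\subset T_S$ is closed in $S^{\mathrm{sm}}$ and contains a dense open part of the original leaf) it is tangent to $\mathcal{F}$ at each smooth point. Hence $T_{S,s}$ contains the one-dimensional kernel $K_s$ of $\omega|_{T_{D,s}}$. Any vector in $K_s$ is $\omega$-orthogonal to every vector of $T_{D,s}$, in particular to $T_{S,s}$, so $\omega|_{T_{S,s}}$ is a skew-symmetric form on a $2$-plane with a nonzero kernel vector, and must vanish.

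For the second and third claims, I would exploit the fact that $p$ is constant on each leaf of $\mathcal{F}$ wherever it is defined. In the chart above, $p$ then factors, on its domain of definition, through the first projection $\pi\colon U\times\Delta\to U$ as $p=q\circ\pi$ for a meromorphic $q\colon U\dasharrow C$. The indeterminacy locus of $p$ is therefore $\pi^{-1}(\operatorname{Indet}(q))=\operatorname{Indet}(q)\times\Delta$, which is manifestly a union of full leaves. Similarly, the surface $S$, being tangent to $\mathcal{F}$, is of the form $Z\times\Delta$ for an analytic curve $Z\subset U$; its normalization is locally $\widetilde Z\times\Delta$ with $\widetilde Z$ smooth, and the pulled-back foliation is the projection onto $\widetilde Z$, which is non-singular.

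No step looks genuinely difficult; the only mild subtlety is the factorisation $p=q\circ\pi$ near the indeterminacy, which is justified by noting that $p$ takes a single value on each leaf in the open set where it is defined and thus descends meromorphically to the quotient $U$ of $\pi$.
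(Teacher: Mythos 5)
Your proposal is correct and coincides with the paper's intended (and essentially unwritten) argument: the paper deduces the observation as ``obvious'' from the local product structure of the smooth foliation, and the lagrangian claim is justified in the introduction exactly by your kernel-containment argument on a $2$-plane. The only glossed point, meromorphic descent of $q$ at the finitely many plaques entirely contained in the indeterminacy locus, is harmless, since one can instead note that at any other indeterminacy point $q$ is already holomorphic on the local leaf space and $p=q\circ\pi$ there, so such points cannot be indeterminate.
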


Note that lagrangian implies projective in this context (see e.g. \cite{C}).
Our next aim is to show that $p$ is regular and hence $S$ is smooth.



Let $\tau: S'\to S$ be the normalization. The image of $T_{S'}$ in $\tau^*T_D$ contains
$\tau^*{\mathcal{F}}$. We denote by $\mathcal{F}'$ the induced foliation on $S'$.
\begin{lemma}\label{conormal}
The conormal bundle of $\mathcal{F}'$ is effective.
\end{lemma}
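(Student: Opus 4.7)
The plan is to use the symplectic form $\omega$ to convert the motion of $S$ in the rational fibration $p$ into a non-zero holomorphic $1$-form on $S'$ annihilating $T_{\mathcal F'}$, thereby producing the desired section of $N^*_{\mathcal F'}$.

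Since $\omega|_D\in H^0(D,\Omega^2_D)$ has kernel exactly $T_{\mathcal F}$ at every point, it defines a nowhere-vanishing section of $\bigwedge^2 N^*_{\mathcal F}$, so $\det N^*_{\mathcal F}\cong\mathcal O_D$. Restricting the subbundle $N^*_{\mathcal F}\subset\Omega^1_D$ to the smooth locus of $S$ and comparing with the conormal sequence of $S\subset D$, one obtains
\[
0\to N^*_{S/D}\to N^*_{\mathcal F}|_S\to N^*_{\mathcal F|_S}\to 0
\]
(the first inclusion holds because a form vanishing on $T_S$ vanishes on $T_{\mathcal F}|_S\subset T_S$), the quotient being the conormal of the induced foliation on $S_{\mathrm{sm}}$. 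Taking determinants yields $N^*_{\mathcal F|_S}\cong N_{S/D}$ on $S_{\mathrm{sm}}$, the isomorphism being given by contraction with $\omega$. A non-zero section of $N_{S/D}$ is then supplied by the family: any second general member $S_0$ of the rational pencil of leaf-closures is linearly equivalent to $S$ as a divisor on $D$, so the section of $\mathcal O_D(S)$ cutting out $S_0$ restricts to a non-zero section $s$ of $\mathcal O_D(S)|_S=N_{S/D}$ vanishing along $S\cap S_0\subset\mathrm{Ind}(p)$.

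To pass to $N^*_{\mathcal F'}$, I would work on $S'$ directly. Let $\pi\colon S'\to D$ denote the composition of $\tau$ with $S\hookrightarrow D$. Given any local lift $\tilde s\in T_D|_{\pi(S')}$ of $s$, set $\beta=\pi^*(i_{\tilde s}\omega)\in\Omega^1_{S'}$. The ambiguity in $\tilde s$ (a local section $w$ of the subsheaf $T_S\subset T_D|_S$) changes $\beta$ by $\pi^*(i_w\omega)$, whose value on $u'\in T_{S'}$ is $\omega(w,d\pi(u'))$; both $w$ and $d\pi(u')$ are sections of $T_S$ (the latter because $d\pi$ factors through $\tau^*T_S$), and $\omega$ vanishes on $T_S\otimes T_S$ (by Lagrangianness of $S_{\mathrm{sm}}$, extended to all of $S$ by torsion-freeness of $\mathcal O_S$). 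Hence $\beta\in H^0(S',\Omega^1_{S'})$ is well defined. Moreover for $u'\in T_{\mathcal F'}$ one has $d\pi(u')\in\pi^*T_{\mathcal F}\subset\ker\omega|_D$, so $\beta(u')=\omega(\tilde s,d\pi(u'))=0$, and $\beta\in H^0(S',N^*_{\mathcal F'})$. It is non-zero because $s$ is non-zero on $S_{\mathrm{sm}}$ and the contraction $N_{S/D}\to N^*_{\mathcal F|_S}$ is injective on that locus.

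The main delicate point is the global well-definedness of $\beta$ on all of $S'$, in particular over $\tau^{-1}(\mathrm{Sing}(S))$: it rests on the global vanishing of $\omega$ on the (possibly non-locally-free) subsheaf $T_S\subset T_D|_S$, which follows from vanishing on the smooth locus of $S$ together with torsion-freeness of the target $\mathcal O_S$. Once this sheaf-theoretic point is granted, the remaining checks (independence of the lift, tangency to $\mathcal F'$, non-vanishing) are routine.
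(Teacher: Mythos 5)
Your overall strategy --- use the motion of $S$ in the pencil to get a ``normal'' section and contract with $\omega$ to turn it into a section of the conormal of the induced foliation --- is the same geometric idea as the paper's proof, but the implementation has a genuine gap exactly at the point you flag and then dismiss as routine. Your $1$-form $\beta$ is constructed from \emph{local lifts} $\tilde s$ of $s\in H^0(S,\mathcal{O}_D(S)|_S)$ through the evaluation map $T_D|_S\to \mathcal{O}_D(S)|_S$, $v\mapsto dF(v)$. That map is surjective only on $S_{\mathrm{sm}}$: at points where $S$ is singular, $dF$ vanishes and a lift of $s$ need not exist at all. So $\beta$ is a priori defined only on $\tau^{-1}(S_{\mathrm{sm}})$. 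The argument you give for ``global well-definedness'' (vanishing of $\omega$ on $T_S\subset T_D|_S$ plus torsion-freeness) only settles the \emph{ambiguity} of the lift where lifts exist; it says nothing about their existence over $\mathrm{Sing}(S)$, nor about extending $\beta$ across $\tau^{-1}(\mathrm{Sing}(S))$. And this locus is not negligible: $\mathrm{Sing}(S)$ sits inside the base locus of the pencil, which is a curve (a union of leaves), so its preimage in $S'$ is a divisor; a holomorphic $1$-form on the complement of a divisor in a smooth surface need not extend, and the meromorphic lift of $\tilde s$ (dividing by $dF$) can introduce poles that the vanishing of $s$ along $S\cap S_0$ is not guaranteed to cancel. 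Note also that this is precisely the case of interest: the lemma is the input for proving that $p$ is regular and $S$ smooth, so you cannot wave away the singular locus --- if $S$ were smooth the statement is immediate since then $N_{S/D}$ is trivial.

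The paper's proof is organized so as to never lift anything through the singularities of $S$: it works with the sheaf $\tau^*T_D/T_{S'}$ on the smooth surface $S'$, which carries a global section coming from the deformation of the map $\tau$ in the one-dimensional family (Horikawa theory, as in Harris--Morrison), and then compares first Chern classes via the exact sequence $0\to T_{S'}/\tau^*\mathcal{F}\to \tau^*(T_D/\mathcal{F})\to \tau^*(T_D)/T_{S'}\to 0$, whose middle term has trivial determinant because it is symplectic; the torsion of the right-hand sheaf, supported on the critical divisor of $\tau$ (a union of leaves), only contributes an extra effective class. If you want to keep your explicit contraction construction, the clean fix is to globalize it sheaf-theoretically: the morphism $\tau^*T_D\to \Omega^1_{S'}$, $v\mapsto \omega(v,d\tau(\cdot))$, is defined everywhere, kills $T_{S'}$ because $\tau^*\omega=0$, lands in the annihilator of $T_{\mathcal{F}'}$ because $\mathcal{F}=\ker(\omega|_D)$, and applied to the global deformation section of $\tau^*T_D/T_{S'}$ it produces the desired nonzero section of $N^*_{\mathcal{F}'}$ with no extension problem. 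As written, however, your proof is incomplete.
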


\begin{proof} Consider the exact sequence 
$$0\to T_{S'}/\tau^*{\mathcal{F}}\to \tau^*(T_D/\mathcal{F})\to \tau^*(T_D)/T_{S'}\to 0.$$
The sheaf in the middle carries a symplectic form and thus, being isomorphic to its 
dual, has zero determinant. We have the normal
bundle of $\mathcal{F}'$ on the left, and the first Chern class of the sheaf on the
right is therefore equal to that of the conormal bundle in the sense of algebraic 
cycles, but the sheafs itself are not necessarily isomorphic since $\tau^*(T_D)/T_{S'}$ can have
torsion at the critical points of $\tau$. The set of those critical points is a
divisor on $S'$ (it is a union of leaves of the regular foliation $\mathcal{F'}$),
so the conormal bundle of $\mathcal{F'}$ is the torsion-free part of $\tau^*(T_D)/T_{S'}$
 possibly tensored up with something effective. But $\tau^*(T_D)/T_{S'}$ modulo torsion
is already effective by deformation theory, since $\tau$ is a general member of a 
one-dimensional family of maps to $D$ (see e.g. \cite{HM}, p. 108-110).
\end{proof}

This lemma and the theory of foliation of surfaces imply the needed

\begin{proposition}\label{regular}
The map $p$ is regular, therefore its general fiber $S$ is smooth.
\end{proposition}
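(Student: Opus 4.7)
The plan is to argue by contradiction. If $p$ is not regular, then, as explained at the start of the section, $B$ is rational and $S$ is a general member of a one-dimensional linear system on $D$, singular along the indeterminacy locus $Z$, which is itself a union of leaves of $\mathcal{F}$ by Observation~\ref{lagr}. The normalization $\tau \colon S' \to S$ then has nontrivial critical locus, which by the discussion in the proof of Lemma~\ref{conormal} is a divisor $C \subset S'$ made of leaves of the smooth foliation $\mathcal{F}'$; in particular every irreducible component of $C$ is a compact leaf of $\mathcal{F}'$. On the other hand, since the Zariski closure of a general leaf of $\mathcal{F}$ is $S$ and $\tau$ is birational, a general leaf of $\mathcal{F}'$ is Zariski-dense in $S'$, and in particular non-compact. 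The goal is to derive a contradiction from this coexistence of compact and Zariski-dense leaves of $\mathcal{F}'$, using Lemma~\ref{conormal}.

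Lemma~\ref{conormal} provides a nonzero holomorphic $1$-form $\eta \in H^0(N^*_{\mathcal{F}'}) \subset H^0(\Omega^1_{S'})$ defining $\mathcal{F}'$. Since $S$ is Lagrangian and hence projective (see \cite{C}), $S'$ is projective and $\eta$ is closed by Hodge theory. Let $L$ be a compact leaf of $\mathcal{F}'$. As $\eta$ annihilates $T_{\mathcal{F}'} \supset T_L$, one has $\eta|_L = 0$, so $\int_\gamma \eta = 0$ for every loop $\gamma \subset L$. A tubular neighbourhood $U$ of $L$ deformation-retracts onto $L$, so the closed form $\eta|_U$ has vanishing periods and is therefore exact: $\eta = dF$ for a holomorphic $F \colon U \to \mathbb{C}$ with $F|_L = 0$. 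After shrinking $U$, one may assume $F \colon U \to \mathbb{D}$ is proper; its fibres are unions of leaves of $\mathcal{F}'$, so every leaf of $\mathcal{F}'$ meeting $U$ is a component of a compact fibre of $F$, and is therefore compact.

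It follows that the compact leaves of $\mathcal{F}'$ form a nonempty open subset of $S'$, so a general leaf is compact, contradicting the first paragraph. Therefore $p$ is regular and the general fibre $S$ is smooth. The main difficulty lies in extending the compact leaf $L$ into an open neighbourhood of compact leaves: effectivity of $N^*_{\mathcal{F}'}$ alone is too weak (as translation-invariant foliations on abelian surfaces show, where the conormal is trivial and yet all leaves can be non-compact and Zariski-dense), so the argument crucially uses closedness of $\eta$ from Kähler Hodge theory on the projective surface $S'$ together with the vanishing $\eta|_L = 0$ forced by $L$ being a leaf, in order to construct the local first integral $F$.
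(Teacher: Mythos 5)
Your proof is correct in substance but takes a genuinely different route from the paper. The paper disposes of the statement in two lines by quoting Brunella's Proposition 6.2 (a foliation with $h^0(N^*_{\mathcal F})\ge 1$ and a compact invariant curve is either a fibration over a non-rational curve or the curve is contractible) together with the Camacho--Sad formula, which gives $C^2=0$ for the compact leaf coming from the indeterminacy locus, so it is not contractible, while $\mathcal F'$ is not a fibration because it has a non-algebraic leaf. You instead replace the citation by a self-contained ``holomorphic Reeb stability'' argument: the section of $N^*_{\mathcal F'}\subset\Omega^1_{S'}$ is a holomorphic, hence closed, $1$-form $\eta$ vanishing on the compact leaf $L$, so it is exact on a tubular neighbourhood, and its primitive $F$ is a non-constant holomorphic first integral (in foliated coordinates $\eta=u(y)\,dy$ with $u$ independent of $x$ by closedness), whence all leaves near $L$ are trapped in compact fibres of $F$, contradicting the existence of Zariski-dense leaves nearby. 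This is a valid alternative; note, however, what each approach requires: the paper's argument needs only \emph{one} non-algebraic leaf on $S$ (a fibration has all leaves algebraic), whereas yours needs the Zariski-dense leaves to actually meet the particular neighbourhood $U$ of $L$. This does follow from the standing assumption that a sufficiently general leaf of $\mathcal F$ has two-dimensional Zariski closure, provided $S$ is taken very general in the family (so that the countably many bad loci cut $S$ in proper subsets), but your closing sentence ``a general leaf is compact, contradiction'' silently uses this transfer and should say so.

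Two smaller points to tighten. First, the normalization need not have a nontrivial critical locus: when $p$ is not regular the general member can perfectly well be smooth along the base locus, so the compact leaf should be produced directly as (the preimage in $S'$ of) an irreducible component of the indeterminacy locus, which lies on $S$ and is a union of leaves by Observation \ref{lagr}; regularity of $\mathcal F$ then forces each such component to be a single compact leaf. Second, the step ``after shrinking, $F$ is proper and every leaf meeting $U$ is compact'' deserves a word: the local computation shows $F^{-1}(0)$ coincides with $L$ near $L$, so one can choose $W\Supset L$ with $|F|\ge\varepsilon$ on $\partial W$ and set $U=W\cap F^{-1}(\mathbb{D}_\varepsilon)$, making the fibres of $F|_U$ compact; then the intersection of a global leaf with such a fibre is open and closed in the leaf, so the leaf cannot escape $U$ and its Zariski closure is a curve. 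With these clarifications the argument is complete.
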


\begin{proof} Brunella in  \cite{Br-book}[Proposition 6.2]  shows that if 
$h^{0}(X, N_{\mathcal{F}}^{*})\ge 1$ for a foliation $\mathcal{F}$ on a surface X and
$C$ a compact curve invariant by the foliation, then either $C$ is contractible or
$\mathcal{F}$ is a fibration over a non-rational curve. If $p$ is not regular, apply this to
$X=S'$ and $C$ the compact leaf of the foliation coming from the indeterminacy locus of 
$p$. By Camacho-Sad formula $C^2=0$, so $C$ is not contractible, but $\mathcal{F'}$ is
not a fibration either, a contradiction.
\end{proof}

From now on we consider the restriction of the foliation $\mathcal{F}$ on $D$ to the smooth surface $S$, which we denote by the same letter $\mathcal{F}$. By assumption, $\mathcal{F}$ has at least one non-algebraic leaf on $S$. As it has been already remarked in the proof
of lemma \ref{conormal}, the symplectic form induces an isomorphism between $N_{S/D}$ and the conormal bundle of $\mathcal{F}$. Since $N_{S/D}$ is trivial, so is 
$N_{\mathcal F}$ and we have  

\begin{lemma}\label{12}
$N_{\mathcal{F}}$ is the trivial line bundle, $K_{S}\simeq K_{\mathcal F}=(T_{\mathcal{F}})^{*}$
and $c_2(S)=0$.
\end{lemma}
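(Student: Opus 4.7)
The plan is simply to record the triviality of $N_{\mathcal{F}}$, which is the real content and is essentially contained in the discussion preceding the lemma, and then extract the rest from the defining short exact sequence of the foliation on the now smooth surface $S$,
$$0 \to T_{\mathcal{F}} \to T_S \to N_{\mathcal{F}} \to 0.$$
For the triviality itself I would combine two ingredients. First, Proposition \ref{regular} ensures that $p\colon D\to B$ is an honest morphism, so $S$ is a genuine fiber over a general point $b\in B$ and
$N_{S/D} = p^{*}T_{B}|_{S} = T_{B,b}\otimes \mathcal{O}_{S}$
is trivial. Second, the symplectic form yields an isomorphism $N_{\mathcal{F}} \cong N_{S/D}^{*}$, exactly as in the exact sequence appearing in the proof of Lemma \ref{conormal}, now applied on the smooth $S$ itself (the middle term $T_{D}/\mathcal{F}$ being self-dual via the induced symplectic form, and the torsion issue disappearing because the normalization $\tau$ is an isomorphism here). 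Putting the two together gives $N_{\mathcal{F}}\cong \mathcal{O}_{S}$.

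From this the remaining assertions are automatic. Taking determinants in the exact sequence above gives
$K_{S}^{-1} = \det T_{S} \cong T_{\mathcal{F}}\otimes N_{\mathcal{F}} \cong T_{\mathcal{F}}$, hence $K_{S}\cong T_{\mathcal{F}}^{*} = K_{\mathcal{F}}$. Multiplicativity of the total Chern class in the same sequence yields
$c(T_{S}) = c(T_{\mathcal{F}})\,c(N_{\mathcal{F}}) = (1+c_{1}(T_{\mathcal{F}}))\cdot 1$, so $c_{2}(S) = c_{1}(T_{\mathcal{F}})\,c_{1}(N_{\mathcal{F}}) = 0$.

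There is no serious obstacle here: the whole lemma is a bookkeeping exercise. The only point that deserves a careful second look is the direction of the symplectic isomorphism $N_{\mathcal{F}}\cong N_{S/D}^{*}$, i.e.\ checking that the foliation normal bundle really lands on the conormal side of $S$ in $D$; but this has effectively already been done in Lemma \ref{conormal} and the fact that $S$ is Lagrangian (Observation \ref{lagr}) makes the matching unambiguous.
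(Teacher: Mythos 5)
Your proposal is correct and follows the paper's own route: the paper likewise obtains triviality of $N_{\mathcal{F}}$ from the symplectic identification of $N_{S/D}$ with the conormal bundle of $\mathcal{F}$ (using that $S$ is now a smooth fiber of the regular map $p$, so $N_{S/D}$ is trivial), and then deduces $K_S\simeq K_{\mathcal{F}}$ and $c_2(S)=0$ from the exact sequence $0\to T_{\mathcal{F}}\to T_S\to N_{\mathcal{F}}\to 0$ exactly as you do.
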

\begin{proof} Indeed the last two statements follow from the first by the exact sequence
of a smooth foliation
$$0\to T_{\mathcal{F}}\to T_{S}\to N_{\mathcal{F}} \to 0.$$ 

\end{proof}

\begin{corollary}\label{3} $S$ is not of general type and the minimal model of $S$ can not be a $K3$ surface or an Enriques surface. If the minimal model of $S$ is a torus, a 
bielliptic or a properly elliptic surface, then $S$ is itself minimal.
\end{corollary}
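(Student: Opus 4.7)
The plan is to exploit the numerical input from Lemma~\ref{12}, namely $c_2(S)=0$, combined with the Euler numbers attached to the various classes in the Enriques--Kodaira classification. The underlying observation is that blowing up a smooth surface at a point raises $e=c_2$ by exactly $1$, since a point of Euler number $1$ is replaced by a $\mathbb{P}^1$ of Euler number $2$. Consequently, if $S\to S_{\min}$ is a sequence of $k$ blow-downs to a minimal model, then $c_2(S_{\min})=c_2(S)-k=-k\le 0$, with equality if and only if $S$ is itself minimal.

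To rule out general type, I would invoke the Bogomolov--Miyaoka--Yau inequality $c_1^2\le 3c_2$, valid on any minimal surface of general type; since such a minimal model has $K^2>0$, this forces $c_2(S_{\min})>0$, contradicting the numerical constraint above. To rule out $K3$ and Enriques, one simply notes that these minimal models have $c_2=24$ and $c_2=12$ respectively, which again contradicts $c_2(S_{\min})\le 0$.

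For the second assertion, I would recall three standard Euler-number computations: a complex torus has $c_2=0$; a bielliptic surface, being a free finite quotient of a product of two elliptic curves, also has $c_2=0$; and for a minimal properly elliptic surface $f:S_{\min}\to C$, the fact that a smooth elliptic fiber has Euler number $0$ yields $c_2(S_{\min})=\sum_p e(f^{-1}(p))$ summed over singular values, a sum of nonnegative contributions, so $c_2(S_{\min})\ge 0$. In each of these three cases, combining with $c_2(S_{\min})=-k\le 0$ forces $c_2(S_{\min})=0$ and $k=0$, whence $S=S_{\min}$.

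The main conceptual input is the appeal to Bogomolov--Miyaoka--Yau to exclude the general-type case; everything else is bookkeeping with Euler numbers and the classification of surfaces, so the argument should be short once the vanishing $c_2(S)=0$ from Lemma~\ref{12} is in hand.
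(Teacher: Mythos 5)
Your argument is correct and follows essentially the same route as the paper: everything rests on $c_2(S)=0$ from Lemma~\ref{12}, the fact that $c_2$ increases under blow-up, and the Euler numbers of the minimal models in the Enriques--Kodaira classification (K3 and Enriques have $c_2>0$; torus, bielliptic and minimal properly elliptic have $c_2\ge 0$). The only cosmetic difference is that where the paper cites \cite{BPV}[Proposition VII.2.4] for the general-type exclusion, you re-derive the positivity of $c_2$ for a minimal surface of general type from $K^2>0$ together with the Bogomolov--Miyaoka--Yau inequality, which is a valid (if slightly heavier) substitute.
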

\begin{proof}
The first statement is \cite{BPV} [Proposition VII.2.4]; the rest follows from the 
classification and from the fact
that $c_2$ goes up under blow-ups. Indeed  $c_2$ is strictly positive for K3 and Enriques
surfaces and non-negative for the other minimal models mentioned.
\end{proof}

\begin{lemma} \label{15}
$h^0(K_{S})>0$, so the Kodaira dimension of $S$ is not equal to $-\infty$ and $S$ can not be a bielliptic surface or its blow-up.
\end{lemma}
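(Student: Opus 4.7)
The plan is to use Lemma~\ref{12}, which gives $N_{\mathcal{F}}$ trivial, to produce a nowhere-vanishing holomorphic $1$-form $\eta \in H^0(S,\Omega^1_S)$ cutting out $\mathcal{F}$ (since the conormal $N_{\mathcal{F}}^* \subset \Omega^1_S$ is trivial). This already gives $q(S)\ge 1$. My strategy is to upgrade this to $q(S)\ge 2$ using the assumption that $\mathcal{F}$ has a non-algebraic leaf on $S$, after which a short wedge-product argument will produce a section of $K_S$.

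To prove $q(S)\ge 2$ I argue by contradiction. If $q(S)=1$, then $\mathrm{Alb}(S)=E$ is an elliptic curve and the Albanese morphism $\alpha\colon S\to E$ is surjective; by universality, $\eta=\alpha^{*}\sigma$ for some nowhere-vanishing $\sigma\in H^0(\Omega^1_E)$. Then $\eta$ being nowhere vanishing forces $\alpha$ to be a submersion, so its fibers are smooth compact curves and the leaves of $\mathcal{F}=\ker\eta$ coincide with connected components of those fibers. In particular every leaf is compact, contradicting the standing assumption. With $q(S)\ge 2$ in hand, pick $\eta_2\in H^0(\Omega^1_S)$ linearly independent from $\eta$: if $\eta\wedge\eta_2\equiv 0$, then, since $\eta$ is nowhere vanishing, $\eta_2/\eta$ glues to a global holomorphic function on the compact surface $S$, hence is constant, contradicting independence. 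Thus $\eta\wedge\eta_2$ is a nonzero section of $K_S$ and $h^0(K_S)\ge 1$. The remaining statements are then standard: $p_g>0$ gives $\kappa(S)\ge 0$, and bielliptic surfaces (or their blow-ups) have $K_S$ torsion of order strictly greater than one, so they are excluded.

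I expect the Albanese step to be the point requiring most care: one must check that the leaves of $\mathcal{F}$ really coincide with connected components of fibers of $\alpha$, and not smaller subsets. This however falls out cleanly, because $\eta$ nowhere vanishing makes $\alpha$ a submersion, so $\ker d\alpha=T_{\mathcal{F}}$ is everywhere a rank-one distribution tangent to the fibers, and its maximal integral submanifolds are precisely the connected components of those fibers. Once this is in place, the rest of the argument is essentially formal.
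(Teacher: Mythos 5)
Your proof is correct, but it takes a genuinely different route from the paper. The paper disposes of the lemma in two lines by quoting Brunella's Lemma 7 of \cite{Br1} (a regular foliation on a smooth projective surface with $h^{0}(T_{\mathcal F}^{*})=0$ and $h^{0}(N_{\mathcal F})>0$ is a fibration) together with $K_S\simeq K_{\mathcal F}$ from Lemma \ref{12}: since $\mathcal F$ is not a fibration, $h^0(K_S)=h^0(K_{\mathcal F})>0$. You instead avoid the citation and exploit the \emph{full} triviality of $N_{\mathcal F}$ (not just its effectivity): the bundle inclusion $N_{\mathcal F}^{*}\hookrightarrow\Omega^1_S$ coming from the smooth-foliation sequence turns the trivializing section into a nowhere-vanishing $1$-form $\eta$ with $\ker\eta=T_{\mathcal F}$; if $q(S)=1$ the Albanese map realizes $\mathcal F$ as a smooth fibration over an elliptic curve, forcing all leaves to be compact and contradicting the standing non-algebraicity assumption, while if $q(S)\ge 2$ the wedge of $\eta$ with an independent form is a nonzero section of $K_S$ (the quotient argument using that $\eta$ never vanishes is sound). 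In effect you re-prove, in situ, the special case of Brunella's lemma that is needed here: your argument is self-contained and elementary, at the price of using the stronger input $N_{\mathcal F}\simeq\mathcal O$ (which Lemma \ref{12} does supply), whereas the paper's argument is shorter and works under the weaker hypothesis $h^0(N_{\mathcal F})>0$. Your concluding remarks ($p_g>0$ gives $\kappa\ge 0$; bielliptic surfaces and their blow-ups have $p_g=0$ since $p_g$ is a birational invariant) match the intended use of the lemma; just phrase the blow-up exclusion via birational invariance of $p_g$ rather than via torsion of $K_S$, which is only literally correct for the minimal surface.
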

\begin{proof}
This is an immediate consequence of a lemma by Brunella \cite[Lemma 7]{Br1} which affirms 
that a regular foliation $\mathcal{F}$ on a smooth projective surface $S$ with 
$h^{0}(T_{\mathcal{F}}^{*})=0$ and $h^{0}(N_{\mathcal{F}})>0$ is a fibration. Since this is not our
case we must have $h^0(K_{S})=h^{0}(K_{\mathcal{F}})>0$.
\end{proof}

By the Kodaira-Enriques classification, we see that if $S$ is not a torus (in which case
we are done), then $S$ is properly elliptic, that is, $S$ is an elliptic surface of
Kodaira dimension 1. We denote the elliptic fibration by $\pi: S\to C$. Note that 
the only singularities of $\pi$ are multiple fibers, by the fact that $c_2(S)=0$ 
(see e.g. \cite{BPV}). Therefore $\pi$ 
induces another smooth rank-one foliation on $S$ which we denote by $\mathcal{G}$ and the corresponding map by $\pi: S\to C$. Let us denote by $g$ the genus of $C$. Note that the arithmetic genus $\chi(\mathcal{O}_{S})$ of $S$ is equal to zero by Noether formula.

 No fiber of $\pi$ can be $\mathcal{F}$-invariant. This can be seen using the same 
proposition by Brunella as in the proof of proposition \ref{regular}. Indeed the fibers of $\pi$ are not contractible,
and $\mathcal{F}$ is a non-algebraic foliation with effective conormal bundle.
Let us now consider the divisor of tangency $D_{tan}$ between $\mathcal{F}$ and 
$\mathcal{G}$. By definition, the support of $D_{tan}$ consists of points where $\mathcal{F}$ and $\mathcal{G}$ are not transversal (see \cite[page 573]{Br1}). 
\begin{lemma}
The divisor of tangency $D_{tan}$ is trivial. 
\end{lemma}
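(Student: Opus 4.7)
The plan is to identify the line bundle $\mathcal{O}_S(D_{tan})$ explicitly, compute its intersection number with a general fiber of $\pi$, and then exclude every possible effective representative. The standard construction realises $D_{tan}$ as the vanishing scheme of the composition $T_{\mathcal{G}} \hookrightarrow T_S \twoheadrightarrow N_{\mathcal{F}}$, which is a global section of $T^{*}_{\mathcal{G}} \otimes N_{\mathcal{F}} = K_{\mathcal{G}} \otimes N_{\mathcal{F}}$. By Lemma~\ref{12}, the normal bundle $N_{\mathcal{F}}$ is the trivial line bundle, so $\mathcal{O}_S(D_{tan}) \simeq K_{\mathcal{G}}$ as actual line bundles.

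Second, I would compute $K_{\mathcal{G}} \cdot F$ for a general smooth fiber $F$ of $\pi$. Adjunction together with $F^2=0$ gives $K_S \cdot F = 0$, and since $\pi$ is a submersion near $F$, the derivative $d\pi: T_S \to \pi^{*}T_C$ identifies $N_{\mathcal{G}}|_F$ with the trivial line bundle $\pi^{*}T_C|_F$, giving $N_{\mathcal{G}} \cdot F = 0$. The tangent sequence $0 \to T_{\mathcal{G}} \to T_S \to N_{\mathcal{G}} \to 0$ yields $K_{\mathcal{G}} \simeq K_S \otimes N_{\mathcal{G}}$, and hence $K_{\mathcal{G}} \cdot F = 0$, i.e.\ $D_{tan} \cdot F = 0$.

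Finally, an effective divisor of intersection zero with the nef fiber class must be supported in fibers of $\pi$. By Lemma~\ref{12} one has $c_{2}(S)=0$, so the only non-reduced or non-smooth fibers of $\pi$ are multiples $m_{i}F_{i}$ of smooth elliptic curves $F_{i}$; thus each irreducible component of $D_{tan}$ is either a smooth fiber or a reduced multiple $F_{i}$. Along any such curve $C$ the foliation $\mathcal{G}$ is tangent to $C$, so $C \subset D_{tan}$ would force $\mathcal{F}$ to coincide with $\mathcal{G}$ along $C$, making $C$ an $\mathcal{F}$-invariant curve and contradicting the remark just above. Therefore $D_{tan}=0$. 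I do not anticipate a serious obstacle here; the one point requiring care is the first step, where honest triviality (not merely numerical triviality) of $N_{\mathcal{F}}$ is what is needed to identify $\mathcal{O}_S(D_{tan})$ with $K_{\mathcal{G}}$ on the nose.
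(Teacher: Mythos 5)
Your proof is correct, but it takes a different route from the paper. The paper argues pointwise: if there were a tangency point, the (reduced) fiber $E$ through it is not $\mathcal{F}$-invariant by the remark preceding the lemma, so Brunella's tangency index $tang(E,\mathcal{F})$ is defined and strictly positive, while the formula $tang(E,\mathcal{F})=c_1(N_{\mathcal{F}})\cdot E-\chi(E)$ gives $0$ since $N_{\mathcal{F}}$ is trivial and $E$ is elliptic — an immediate contradiction, with no need to analyze the divisor class of $D_{tan}$ at all. You instead work globally: your identification $\mathcal{O}_S(D_{tan})\simeq T^{*}_{\mathcal{G}}\otimes N_{\mathcal{F}}\simeq K_{\mathcal{G}}$ is exactly Brunella's Lemme 4, which the paper only invokes afterwards (in Corollary \ref{50}, where it is used in the reverse direction to deduce triviality of $T^{*}_{\mathcal{G}}$ from $D_{tan}=0$); you then compute $D_{tan}\cdot F=0$, use nefness of the fiber class and $c_2(S)=0$ to force every component of $D_{tan}$ to be a (reduced) fiber, and exclude that via the same non-invariance remark, since a fiber inside $D_{tan}$ would have $T_{\mathcal{F}}$ equal to its tangent line and hence be $\mathcal{F}$-invariant. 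Both arguments rest on the triviality of $N_{\mathcal{F}}$ (Lemma \ref{12}) and on the remark that no fiber is $\mathcal{F}$-invariant; yours needs the extra support analysis but is self-contained modulo the standard construction of the tangency section, whereas the paper's is shorter by black-boxing the index formula from \cite{Br-book}. Two small remarks: the worry you flag about honest versus numerical triviality is indeed harmless, since Lemma \ref{12} gives $N_{\mathcal{F}}\simeq\mathcal{O}$ on the nose (via the symplectic form and $N_{S/D}\simeq\mathcal{O}$); and you should note in passing that the tangency section is not identically zero, i.e. $\mathcal{F}\neq\mathcal{G}$, which holds because $\mathcal{F}$ has a non-algebraic leaf while $\mathcal{G}$ is a fibration — otherwise $D_{tan}$ would not be a divisor. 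Also, your intersection computation can be shortened: for a general smooth fiber $F$ one has $T_{\mathcal{G}}|_F=T_F\simeq\mathcal{O}_F$, so $K_{\mathcal{G}}\cdot F=\deg K_F=0$ directly.
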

\begin{proof}
Suppose that there exists a point of tangency $p.$  Let $E$ be the leaf of $\mathcal{G}$
(i.e. the reduction of the fiber of $\pi$)
containing $p$. By the claim above $E$ is not $\mathcal{F}$-invariant, so we can consider the tangency index $tang(E,\mathcal{F})$ (see e.g. \cite{Br-book}, chapter III) and by supposition it is strictly positive. But by a formula from the same reference $$tang(E,\mathcal{F})=c_{1}(N_{\mathcal{F}})\cdot E-\chi(E)=0,$$ hence we have a contradiction.
\end{proof}

\begin{corollary} \label{50}
The tangent bundle to $S$ is the direct sum of its subbundles $T_{\mathcal{F}}$ and
$T_{\mathcal{G}}$, and  $T_{\mathcal{G}}$ is trivial. 
\end{corollary}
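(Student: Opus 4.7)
The plan is to obtain both assertions as direct formal consequences of the previous lemma (vanishing of the tangency divisor) combined with Lemma~\ref{12}, with essentially no further geometric input.

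First I would handle the splitting. The condition $D_{tan}=0$ says exactly that at every point $x\in S$ the two line subbundles $T_{\mathcal{F},x}$ and $T_{\mathcal{G},x}$ of $T_{S,x}$ are distinct, i.e.\ the natural map
$$T_{\mathcal{F}}\oplus T_{\mathcal{G}}\to T_{S}$$
is injective on every fiber. Since source and target are locally free of the same rank and the map is a fiberwise isomorphism everywhere on $S$, it is an isomorphism of vector bundles. This gives $T_{S}=T_{\mathcal{F}}\oplus T_{\mathcal{G}}$.

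For the triviality of $T_{\mathcal{G}}$, I would take determinants of the above splitting: $\det T_{S}=T_{\mathcal{F}}\otimes T_{\mathcal{G}}$, hence
$$K_{S}=(\det T_{S})^{*}=T_{\mathcal{F}}^{*}\otimes T_{\mathcal{G}}^{*}=K_{\mathcal{F}}\otimes K_{\mathcal{G}}.$$
By Lemma~\ref{12} we already know that $K_{S}\simeq K_{\mathcal{F}}=T_{\mathcal{F}}^{*}$. Cancelling $K_{\mathcal{F}}$ from both sides yields $K_{\mathcal{G}}=\mathcal{O}_{S}$, i.e.\ $T_{\mathcal{G}}$ is the trivial line bundle.

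There is no real obstacle: the previous lemma does all the geometric work, and what remains is bookkeeping with line bundles. If anything, the only point that needs a moment's thought is the passage from ``fiberwise injective'' to ``isomorphism of bundles'' in the first step, but this is standard since we are comparing locally free sheaves of the same rank via a map whose cokernel is supported in codimension at least one and here in fact nowhere.
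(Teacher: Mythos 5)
Your proof is correct, and for the second assertion it takes a slightly different route than the paper. The splitting argument is the same in both: triviality of $D_{tan}$ means the two line subbundles $T_{\mathcal{F}}$ and $T_{\mathcal{G}}$ are distinct in every fiber, so $T_{\mathcal{F}}\oplus T_{\mathcal{G}}\to T_S$ is a fiberwise isomorphism and hence an isomorphism of bundles. For the triviality of $T_{\mathcal{G}}$, however, the paper does not use the splitting at all: it quotes Brunella's formula $\mathcal{O}(D_{tan})=T^{*}_{\mathcal{G}}\otimes N_{\mathcal{F}}$ (Lemme 4 of \cite{Br1}), which together with $D_{tan}=0$ and $N_{\mathcal{F}}\simeq\mathcal{O}$ gives $T^{*}_{\mathcal{G}}\simeq\mathcal{O}$ directly; that formula holds without any transversality hypothesis, which is why the paper can invoke it independently of the first statement. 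You instead deduce $K_S\simeq K_{\mathcal{F}}\otimes K_{\mathcal{G}}$ from the splitting and cancel against $K_S\simeq K_{\mathcal{F}}$ of Lemma~\ref{12}; cancellation in the Picard group is legitimate for line bundles, so this is fine, and it has the small advantage of being self-contained (no external citation beyond what the paper has already established, since Lemma~\ref{12} itself rests only on $N_{\mathcal{F}}\simeq\mathcal{O}$). In fact, once you have the splitting you could shortcut even further: the projection $T_S\to T_S/T_{\mathcal{F}}=N_{\mathcal{F}}$ restricts to an isomorphism $T_{\mathcal{G}}\simeq N_{\mathcal{F}}\simeq\mathcal{O}$, avoiding determinants altogether. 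Either way the content is the same: everything reduces to the triviality of $N_{\mathcal{F}}$ and the vanishing of the tangency divisor.
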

\begin{proof} The first statement is another formulation of the transversality of 
${\mathcal{F}}$ and ${\mathcal{G}}$ at every point. The second follows from the formula
$$\mathcal{O}(D_{tan})=T^{*}_{\mathcal{G}}\otimes N_{\mathcal{F}}$$(see for example \cite[Lemme 4]{Br1}). We know that $N_{\mathcal{F}}$ is trivial so we get that $T^{*}_{\mathcal{G}}$ is also trivial.
\end{proof}
\begin{lemma} \label{60}
The canonical bundle $K_{S}$ of $S$ is isomorphic to 
$\pi^*(K_{C})\otimes {\mathcal O} (\sum(l_{E}-1)E)$, where $l_E$ denotes the multiplicity of $E$ as 
a fiber of $\pi$.
\end{lemma}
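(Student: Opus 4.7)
The plan is to exploit the decomposition $T_S=T_{\mathcal{F}}\oplus T_{\mathcal{G}}$ from Corollary~\ref{50} to reduce the formula to an identification $K_S\simeq N^*_{\mathcal{G}}$, and then to compute $N^*_{\mathcal{G}}$ by comparing it with $\pi^*\Omega^1_C$ via a local computation at the multiple fibers of $\pi$.

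First, taking determinants in $T_S=T_{\mathcal{F}}\oplus T_{\mathcal{G}}$ and using that $T_{\mathcal{G}}$ is trivial yields $K_S\simeq K_{\mathcal{F}}=T^*_{\mathcal{F}}$. The same splitting gives $T_S/T_{\mathcal{G}}\simeq T_{\mathcal{F}}$, so $N_{\mathcal{G}}\simeq T_{\mathcal{F}}$ and hence $K_S\simeq N^*_{\mathcal{G}}$. Thus the task becomes: compute the conormal bundle of the foliation $\mathcal{G}$ by reductions of the fibers of $\pi$.

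Next, I would consider the natural map $\pi^*\Omega^1_C\to\Omega^1_S$. Since forms pulled back from $C$ annihilate $T_{\mathcal{G}}$, this factors through an injection
\[
\pi^*\Omega^1_C \;\hookrightarrow\; N^*_{\mathcal{G}},
\]
which is an isomorphism on the open locus where $\pi$ is a submersion. As already noted in the excerpt, $c_2(S)=0$ rules out any singular fibers of $\pi$ other than multiples of smooth elliptic curves, so the only failure loci are the reduced multiple fibers $E$. At a multiple fiber of multiplicity $l_E$, the standard local model is $\pi=y^{l_E}$ in coordinates $(x,y)$ with $E=\{y=0\}$; then $\pi^{*}dt=l_E\,y^{l_E-1}\,dy$ while $N^*_{\mathcal{G}}$ is locally generated by $dy$, so the cokernel has length $l_E-1$ along $E$. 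Globalising over the multiple fibers gives
\[
N^*_{\mathcal{G}}\;\simeq\;\pi^*\Omega^1_C\otimes\mathcal{O}\!\left(\sum(l_E-1)E\right)\;=\;\pi^*K_C\otimes\mathcal{O}\!\left(\sum(l_E-1)E\right),
\]
from which the lemma follows on combining with $K_S\simeq N^*_{\mathcal{G}}$.

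The main obstacle, though minor, is justifying the local normal form $\pi=y^{l_E}$ at a multiple fiber. This uses the smoothness of the foliation $\mathcal{G}$ (so that $E$ is cut out by a single smooth coordinate $y$) together with the absence of non-multiple singular fibers coming from $c_2(S)=0$; once this normal form is in hand, the remaining comparison of line bundles is the length-$(l_E-1)$ bookkeeping above.
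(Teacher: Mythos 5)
Your proof is correct, but it takes a different route from the paper's. Both arguments start from Corollary~\ref{50}: the paper uses the triviality of $T^*_{\mathcal{G}}=K_{\mathcal{G}}$ together with the standard formula $K_{\mathcal{G}}\simeq K_{S/C}\otimes\mathcal{O}(\sum(1-l_E)E)$ for a foliation defined by a fibration, and then invokes Kodaira's canonical bundle formula $K_S\simeq\pi^*(K_C\otimes L)\otimes\mathcal{O}(\sum(l_E-1)E)$; comparing the two gives $T^*_{\mathcal{G}}\simeq\pi^*L$, hence the fundamental line bundle $L=(R^1\pi_*\mathcal{O}_S)^*$ is trivial and the stated formula for $K_S$ follows. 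You instead rewrite the triviality of $T_{\mathcal{G}}$ as $K_S\simeq N^*_{\mathcal{G}}$ and compute the conormal bundle of $\mathcal{G}$ by hand via the map $\pi^*\Omega^1_C\to\Omega^1_S$ and the local model $\pi=y^{l_E}$ at a multiple fiber, which bypasses Kodaira's formula and the fundamental line bundle altogether; your justification of the normal form is adequate, since $c_2(S)=0$ forces all singular fibers to be multiples of smooth elliptic curves, so locally $t\circ\pi=u\,y^{l_E}$ with $u$ a unit and one absorbs an $l_E$-th root of $u$ into $y$. What each approach buys: yours is more elementary and self-contained, while the paper's proof establishes en route the triviality of $L$, a fact it explicitly reuses afterwards (Proposition~\ref{16}, Corollaries~\ref{70} and~\ref{17}, Lemma~\ref{13}). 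If one adopted your argument, that triviality would still follow, but only by an extra comparison of your formula with Kodaira's formula (using $\pi_*\mathcal{O}_S=\mathcal{O}_C$ to descend $\pi^*L\simeq\mathcal{O}$ to $L\simeq\mathcal{O}$), so it would have to be recorded separately.
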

\begin{proof} We have just seen that $K_\mathcal{G}$ is trivial, on the other hand,
one can compute this as the canonical bundle of a foliation defined by a fibration:
$$K_{\mathcal{G}}\simeq K_{S/C}\otimes {\mathcal O} (\sum(1-l_{E})E)$$ where $K_{S/C}=K_{S}\otimes \pi^*(K_{C}^*)$ is the relative canonical bundle and the sum is over all fibres of $\pi$. 
By Kodaira's canonical bundle formula $K_{S}$ is equal to $\pi^*(K_C\otimes L)\otimes {\mathcal O}(\sum(l_{E}-1)E)$ where $L=(R^{1}\pi_{*}\mathcal{O}_{S})^{*}\cong \pi_{*}K_{S/C}$ is the 
so-called 
fundamental line bundle   on $C$ (see for example \cite[Corollary V.12.3]{BPV}). So  $T^{*}_{\mathcal{G}}\simeq K_{S/C}\otimes {\mathcal O}(\sum(1-l_{E})E) \simeq \pi^*(L).$ Since $T^{*}_{\mathcal{G}}\simeq \mathcal{O}$ by the Corollary \ref{50}, we get that $L$ is trivial. Hence $K_{S}\simeq \pi^*(K_{C})\otimes {\mathcal O}(\sum(l_{E}-1)E)$. 
\end{proof}

Note that the triviality of the fundamental line bundle $(R^{1}\pi_{*}\mathcal{O}_{S})^{*}$ also has the following geometric consequence (see e.g. \cite{FM}[Proposition 3.22]):
\begin{proposition} \label{16}
Let $S$ be an elliptic surface and let $d=\operatorname{deg}(R^{1}\pi_{*}\mathcal{O}_{S})^{*}$ and $g$ be the genus of the base curve C. Then $p_{g}(S)=d+g-1$, if $(R^{1}\pi_{*}\mathcal{O}_{S})^{*}$ is not trivial, and $p_{g}(S)=g$ if $(R^{1}\pi_{*}\mathcal{O}_{S})^{*}$ is trivial.
\end{proposition}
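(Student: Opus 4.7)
The plan is to compute $p_g(S) = h^0(S, K_S)$ by pushing the canonical bundle down to $C$ and then applying Riemann--Roch on the base curve. Starting from Kodaira's canonical bundle formula, already recorded in the proof of Lemma \ref{60},
$$K_S \simeq \pi^*(K_C \otimes L) \otimes \mathcal{O}\Bigl(\sum (l_E - 1)E\Bigr),$$
the projection formula yields $\pi_* K_S \simeq (K_C \otimes L) \otimes \pi_*\mathcal{O}(\sum(l_E-1)E)$. The key local computation is that $\pi_*\mathcal{O}(\sum(l_E-1)E) \simeq \mathcal{O}_C$: near a multiple fibre, choosing analytic coordinates with $u$ a local equation for $E$ and $t = u^{l_E}$ the pullback of the base parameter, the sheaf $\mathcal{O}((l_E-1)E)$ is locally generated by $1, u, \dots, u^{l_E-1}$, and only $1$ is a power of $t$, hence only $1$ descends to a section on the base.

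Consequently $\pi_* K_S \simeq K_C \otimes L$, and since $\pi$ has connected fibres and $R^i\pi_* K_S$ contribute only to higher cohomology, the Leray spectral sequence gives
$$H^0(S, K_S) \simeq H^0(C, K_C \otimes L).$$
Riemann--Roch on $C$ then reads
$$h^0(K_C \otimes L) - h^0(L^{-1}) = d + g - 1.$$
On a (minimal) elliptic surface one has $d = \deg L \geq 0$. If $L$ is nontrivial, then $L^{-1}$ is either of negative degree or of degree zero but nontrivial, and in both situations $h^0(L^{-1}) = 0$, giving $p_g(S) = d + g - 1$. If $L$ is trivial, then $K_C \otimes L = K_C$ and $h^0 = g$, which is the second formula in the statement.

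The only substantive point is the local computation $\pi_*\mathcal{O}(\sum(l_E-1)E) = \mathcal{O}_C$ at the multiple fibres; everything else is a formal application of the projection formula, Leray, and Riemann--Roch on a curve. The distinction between the two cases in the proposition is simply the distinction between $h^0(L^{-1}) = 0$ and $h^0(L^{-1}) = 1$, which corresponds to $L$ being nontrivial versus trivial in the range $d \geq 0$ forced by the elliptic geometry.
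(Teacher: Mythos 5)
Your proof is correct. Note that the paper itself does not prove this proposition: it is quoted from Friedman--Morgan ([FM, Proposition 3.22]), and the argument you give --- Kodaira's canonical bundle formula, the identity $\pi_*\mathcal{O}_S\bigl(\sum(l_E-1)E\bigr)\simeq\mathcal{O}_C$, and then Riemann--Roch together with Serre duality on $C$, with the dichotomy governed by $h^0(L^{-1})$ being $0$ or $1$ --- is precisely the standard argument underlying that citation. Two minor remarks. First, the canonical bundle formula and the inequality $d=\deg L=\chi(\mathcal{O}_S)\ge 0$ (which you assert without proof) require $\pi$ to be relatively minimal; this is implicit in the statement and automatic in the paper's situation, where $S$ is minimal by Corollary \ref{3}. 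Second, your phrase that $\mathcal{O}((l_E-1)E)$ is ``locally generated by $1,u,\dots,u^{l_E-1}$'' is imprecisely worded (as an $\mathcal{O}_S$-module it is generated by $u^{-(l_E-1)}$); the transparent way to see $\pi_*\mathcal{O}_S\bigl(\sum(l_E-1)E\bigr)=\mathcal{O}_C$ is that a section over a small disc around a critical value is holomorphic off the multiple fibre, hence constant on the nearby compact connected fibres, hence of the form $\pi^*g$ with $g$ meromorphic at the critical value; a pole of $g$ there would force a pole of order at least $l_E>l_E-1$ along $E$, so $g$ is holomorphic and the pushforward is trivial. With these points made explicit, the argument is a complete and standard proof of the quoted result.
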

\begin{corollary} \label{70}
The geometric genus of $S$ is equal to $g$.
\end{corollary}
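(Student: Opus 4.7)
The plan is to apply Proposition \ref{16} directly, exploiting that the fundamental line bundle of our elliptic fibration $\pi: S \to C$ has already been shown to be trivial. Recall that Proposition \ref{16} presents a dichotomy: $p_g(S) = d + g - 1$ when $(R^1\pi_*\mathcal{O}_S)^*$ is nontrivial, and $p_g(S) = g$ when it is trivial. The entire task, then, reduces to locating which branch of this dichotomy we are in.

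The key step is to invoke Lemma \ref{60}, where we established in passing that the fundamental line bundle $L = (R^1\pi_*\mathcal{O}_S)^* \cong \pi_*K_{S/C}$ on $C$ is trivial. This itself came from combining the identification $T^*_{\mathcal{G}} \simeq \pi^*(L)$, obtained via Kodaira's canonical bundle formula and the multiplicities contribution of the multiple fibers, with the triviality of $T^*_{\mathcal{G}}$ coming from Corollary \ref{50}, which in turn rested on the vanishing of the tangency divisor $D_{\mathrm{tan}}$ between $\mathcal{F}$ and $\mathcal{G}$.

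With $L$ trivial, the second clause of Proposition \ref{16} applies verbatim and yields $p_g(S) = g$, which is what we wanted. There is no genuine obstacle here: every piece of substantive work has been done in the preceding chain of lemmas, and the corollary is essentially a bookkeeping step that repackages the triviality of the fundamental line bundle as a statement about a numerical invariant of $S$. One presumably needs $p_g(S) = g$ in this form, rather than as triviality of $L$, in order to feed it into the deformation-theoretic argument of the last section.
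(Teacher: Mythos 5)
Your proof is correct and is exactly the paper's intended argument: the corollary is stated immediately after Proposition \ref{16} precisely because the triviality of the fundamental line bundle, established in Lemma \ref{60}, places us in the second branch of that dichotomy, giving $p_g(S)=g$. Nothing further is needed.
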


In particular we get by the Lemma \ref{15} that $g=g(C)>0$. 


\section{Geometry of deformations of $S$}

Let us denote by $M$ an irreducible component of Barlet space of $X$ containing $[S].$
The following lemma follows from the fact that the deformations of lagrangian 
subvarieties are unobstructed and remain lagrangian (e.g.  \cite{V}).

\begin{lemma} \label{54}
(i)The Barlet space $\mathcal{B}(X)$ is smooth of dimension $g+1$ near $S$. In particular, $S$ is contained in a unique irreducible component $M$ of $\mathcal{B}(X)$. 

(ii)If $[S^{'}]\in M$ represents a smooth subvariety $S'$, then $S'$ is an elliptic Lagrangian surface in $X$ and $h^0(\Omega^1_{S^{'}})=g(C)+1.$
\end{lemma}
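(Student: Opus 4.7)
The plan is to combine Voisin's unobstructedness theorem for Lagrangian deformations, cited in the paragraph just preceding the statement, with the numerical invariants of $S$ collected in Section~2.

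For part (i), I would start from the fact that the Zariski tangent space to $\mathcal{B}(X)$ at $[S]$ is $H^0(S,N_{S/X})$; since $S$ is Lagrangian, the symplectic form yields $N_{S/X}\cong\Omega^1_S$, so this dimension equals $h^0(\Omega^1_S)=q(S)$ (using Hodge symmetry, valid because $S$ is K\"ahler — indeed projective, as remarked after Observation \ref{lagr}). I would then compute $q(S)$ via Noether's formula: by Lemma \ref{60}, $K_S$ is supported on fibres of $\pi$, which have pairwise zero intersection, so $K_S^2=0$; together with $c_2(S)=0$ from Lemma \ref{12} this gives $\chi(\mathcal{O}_S)=0$. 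Combined with $p_g(S)=g$ from Corollary \ref{70}, the identity $\chi=1-q+p_g$ forces $q(S)=g+1$. Voisin's unobstructedness then makes $\mathcal{B}(X)$ smooth of dimension $g+1$ at $[S]$, so $[S]$ lies in a unique irreducible component $M$.

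For part (ii), Voisin's theorem also guarantees that Lagrangianity persists in the deformation, so $S'$ is Lagrangian and $N_{S'/X}\cong\Omega^1_{S'}$. Applying the tangent-space argument at the smooth point $[S']$ shows $\mathcal{B}(X)$ is smooth there of dimension $h^0(\Omega^1_{S'})$; since $[S']\in M$ and $M$ is irreducible of dimension $g+1$, we obtain $h^0(\Omega^1_{S'})=g+1$. To conclude that $S'$ is elliptic I would restrict $M$ to its open locus of smooth members, producing a smooth proper family, so that $S'$ is diffeomorphic to $S$; in particular $c_2(S')=0=K_{S'}^2$ and $\chi(\mathcal{O}_{S'})=0$, while deformation invariance of plurigenera on surfaces gives $\kappa(S')=\kappa(S)$. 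The Enriques--Kodaira classification then forces $S'$ to be either a torus or properly elliptic, exactly as $S$ is. The main delicate point is this last step, the invariance of the Kodaira dimension; in dimension two it is a classical consequence of the invariance of plurigenera under smooth deformation, but it is where one has to be careful.
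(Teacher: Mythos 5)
Your proposal is correct and follows essentially the same route as the paper: tangent space $H^0(N_{S/X})\simeq H^0(\Omega^1_S)$ of dimension $g+1$ (the paper quotes $p_g(S)+1-\chi(\mathcal{O}_S)$ with $\chi(\mathcal{O}_S)=0$ and $p_g=g$ from Section~2, which is exactly your Noether/Hodge-symmetry computation), Voisin's unobstructedness and persistence of the Lagrangian condition for smoothness and for $N_{S'/X}\simeq\Omega^1_{S'}$, and deformation invariance of plurigenera (the paper cites Siu) to keep $S'$ properly elliptic. The only cosmetic difference is that you transfer $c_2$ and $\chi$ via an Ehresmann-type diffeomorphism argument, whereas the paper simply records $c_2(S')=c_2(S)=0$ immediately after the lemma.
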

\begin{proof}
(i) See the proof of \cite{GLR}[Lemma 3.1]. The tangent space to $M$ at $[S]$ is isomorphic to $H^{0}(N_{S/X}).$ Since $S$ is Lagrangian, the symplectic form induces an isomorphism $\Omega^1_{S}\simeq N_{S/X},$ so $T_{[S]}(M)\simeq H^{0}(\Omega^1_{S}).$ The dimension of this last vector space is equal to $p_g(S)+1-\chi({\mathcal O}_S)=g+1.$

(ii) By \cite{V}, deformations of $S$ in $X$ are unobstructed and are also Lagrangian. So $S'$ is Lagrangian and the symplectic form induces an isomorphism $\Omega^1_{S^{'}}\simeq N_{S^{'}/X}$. The dimension of the tangent space to $M$ at $S$ is equal to the dimension of the tangent space to $M$ at $S'$, i.e.  $h^0(\Omega^1_{S^{'}})= h^0(\Omega^1_{S})=g(C)+1.$ 
The Kodaira dimension is invariant by smooth deformations (even the plurigenera are,
by a theorem of 
Siu \cite{S}), so the Kodaira dimension of $S'$ is equal to one, i.e. $S^{'}$ is a properly elliptic surface over a curve $C'$. 
\end{proof}

We also have $c_{2}(S^{'})= c_2(S)=0.$  Hence $S'\to C'$ has only multiple fibers as singularities, since all singular fibers give nontrivial contribution to $c_2.$ The number of multiple fibers and their multiplicities are also invariant under deformation, see for example \cite[Proposition 7.1]{FM}. The genus of $C'$ is equal to $g=g(C)$, since a deformation of $S$ induces a deformation of $C$ (see e.g. \cite[Theorem 7.11]{FM}).


\begin{corollary} \label{17}
The geometric genus $p_{g}(S')$ of $S'$ is equal to $g(C),$ where $C$ is the base curve of $S.$ Moreover the fundamental line bundle of the elliptic fibration on $S^{'}$ is trivial.
\end{corollary}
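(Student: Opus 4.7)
The plan is to compute $p_g(S')$ directly from the invariants already available for $S'$, and then to use Proposition \ref{16} to deduce triviality of the fundamental line bundle.

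First I would establish $\chi(\mathcal{O}_{S'})=0$. Since $c_2(S')=c_2(S)=0$ (noted immediately after Lemma \ref{54}) and $S'$ is properly elliptic, we have $K_{S'}^2=0$; Noether's formula then gives $\chi(\mathcal{O}_{S'})=(K_{S'}^2+c_2(S'))/12=0$.

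Combining this with Lemma \ref{54}(ii), which yields $q(S')=h^0(\Omega^1_{S'})=g(C)+1$, the identity $\chi(\mathcal{O}_{S'})=1-q(S')+p_g(S')$ forces $p_g(S')=g(C)$. This is the first assertion of the corollary.

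For the second assertion, the degree of the fundamental line bundle $(R^1\pi'_*\mathcal{O}_{S'})^*$ of the elliptic fibration $\pi':S'\to C'$ equals $\chi(\mathcal{O}_{S'})$ (standard from the Leray spectral sequence), hence vanishes. Writing $g=g(C)=g(C')$, Proposition \ref{16} applied to $S'$ gives $p_g(S')=g-1$ if this line bundle is non-trivial and $p_g(S')=g$ if it is trivial. Since we have just shown $p_g(S')=g$, we conclude that the fundamental line bundle must be trivial. No serious obstacle arises here: once Lemma \ref{54}(ii) and the deformation invariance of $c_2$ are in hand, both assertions follow formally from Noether's formula and Proposition \ref{16}.
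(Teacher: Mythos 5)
Your argument is correct in substance, but it reaches the first statement by a different route than the paper. The paper simply invokes deformation invariance of the (pluri)genera, so that $p_g(S')=p_g(S)=g(C)$ by Corollary \ref{70}; you instead compute $p_g(S')$ from $\chi(\mathcal{O}_{S'})=0$ together with $h^0(\Omega^1_{S'})=g(C)+1$ from Lemma \ref{54}(ii) (note that you also use Hodge symmetry $q(S')=h^0(\Omega^1_{S'})$, which holds because $S'$ is a compact K\"ahler, indeed projective, surface). Both routes work, but one step of yours is stated too quickly: being properly elliptic does not by itself give $K_{S'}^2=0$, since a non-minimal properly elliptic surface has $K^2<0$. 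You need (relative) minimality of the elliptic fibration on $S'$ --- which does follow from $c_2(S')=0$, as the paper notes that the only singular fibers are multiple ones --- or, more simply, you can avoid Noether's formula for $S'$ altogether: $\chi(\mathcal{O})$ is a deformation invariant and the paper has already recorded $\chi(\mathcal{O}_S)=0$; alternatively $K^2$ is a Chern number, hence $K_{S'}^2=K_S^2=0$ because $S$ is minimal by Corollary \ref{3}. For the second statement your argument follows the paper ($g(C')=g(C)$ plus Proposition \ref{16}), but your additional observation that the fundamental line bundle has degree $\chi(\mathcal{O}_{S'})=0$ is a genuine gain in precision: as stated, Proposition \ref{16} with a non-trivial fundamental line bundle of degree $1$ would also give $p_g=g$, so ruling that case out really does require knowing the degree vanishes, a point the paper's one-line proof leaves implicit.
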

\begin{proof}
The first statement is the invariance of (pluri)genera; the second follows by $g(C)=g(C')$ and 
Proposition \ref{16}.
\end{proof}
Now we are ready for our main conclusion on deformations of $S$.
\begin{lemma} \label{13}
The sheaf $\Omega^1_{S^{'}}$ decomposes into the direct sum $$\Omega^1_{S^{'}}\simeq \mathcal{O}\oplus (\pi^*(\Omega^{1}_{C^{'}}) \otimes {\mathcal O}(\sum(l_{E}-1)E)),$$ where $\pi: S^{'}\to C'$ is an elliptic fibration of $S'$. Since $\Omega^1_{S^{'}}\simeq N_{S^{'}/X}$ the same statement is true for the normal bundle to $S'$ in $X.$
\end{lemma}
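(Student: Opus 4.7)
The plan is to realise the claimed decomposition as a splitting of the short exact sequence associated with the foliation defined by the elliptic fibration $\pi: S'\to C'$, obtained via a dimension count on global sections.

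First I would transport the calculations of Corollary \ref{50} and Lemma \ref{60} to $S'$. By Corollary \ref{17} the fundamental line bundle $(R^{1}\pi_{*}\mathcal{O}_{S'})^{*}$ is trivial, so Kodaira's canonical bundle formula gives $K_{S'} \simeq \pi^{*}K_{C'} \otimes \mathcal{O}(\sum (l_{E}-1)E)$. If $\mathcal{G}'$ denotes the foliation defined by $\pi$ on $S'$, then $K_{\mathcal{G}'} \simeq K_{S'/C'} \otimes \mathcal{O}(\sum(1-l_{E})E) \simeq \mathcal{O}$, hence $T_{\mathcal{G}'} \simeq \mathcal{O}$. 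Taking determinants in the foliation sequence $0 \to T_{\mathcal{G}'} \to T_{S'} \to N_{\mathcal{G}'} \to 0$ then yields $N_{\mathcal{G}'} \simeq K_{S'}^{-1}$, and dualizing gives the short exact sequence
$$0 \to K_{S'} \to \Omega^{1}_{S'} \to \mathcal{O} \to 0.$$

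The remaining step is to show this splits, which I would do via a dimension count: Corollary \ref{17} gives $h^{0}(K_{S'}) = p_{g}(S') = g$, Lemma \ref{54} gives $h^{0}(\Omega^{1}_{S'}) = g+1$, and $h^{0}(\mathcal{O}_{S'}) = 1$. The long exact sequence in cohomology then forces the projection $H^{0}(\Omega^{1}_{S'}) \to H^{0}(\mathcal{O}_{S'})$ to be surjective, and any section $\sigma$ of $\Omega^{1}_{S'}$ mapping to $1 \in H^{0}(\mathcal{O}_{S'})$ determines a splitting $\mathcal{O}_{S'} \to \Omega^{1}_{S'}$. Using $K_{C'} = \Omega^{1}_{C'}$, this gives the claimed decomposition of $\Omega^{1}_{S'}$; the corresponding statement for $N_{S'/X}$ is then immediate from the lagrangian isomorphism $\Omega^{1}_{S'} \simeq N_{S'/X}$ recorded in Lemma \ref{54}.

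The main conceptual point to check is the triviality $T_{\mathcal{G}'} \simeq \mathcal{O}$ on $S'$: on $S$ this was deduced in Corollary \ref{50} from a tangency-divisor argument between $\mathcal{F}$ and $\mathcal{G}$, which is not directly available on $S'$ since $S'$ need not lie inside $D$. However, the direct Kodaira-formula computation above, using only the triviality of the fundamental line bundle from Corollary \ref{17}, cleanly replaces that argument. After this identification, everything reduces to the dimension count, which works precisely because $h^{0}(\Omega^{1}_{S'})$ exceeds $p_{g}(S')$ by exactly one.
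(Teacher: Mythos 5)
Your proposal is correct and follows essentially the same route as the paper: both use the triviality of the fundamental line bundle (Corollary \ref{17}) to identify $K_{\mathcal{G}'}\simeq\mathcal{O}$ and $N^*_{\mathcal{G}'}\simeq K_{S'}\simeq \pi^*(\Omega^1_{C'})\otimes\mathcal{O}(\sum(l_E-1)E)$, and then split the dual foliation sequence by the section count $h^0(\Omega^1_{S'})=g+1$ versus $h^0(K_{S'})=g$. The only difference is cosmetic (you pass through the determinant of the tangent sequence rather than quoting the conormal identification directly), so there is nothing to add.
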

\begin{proof} 
The elliptic fibration on $S'$ induces a smooth foliation $\mathcal{G'}$.
Consider the corresponding exact sequence on $S'$: 
\begin{equation} \label{18}
0\to N_{\mathcal{G'}}^{*} \to \Omega^{1}_{S^{'}}\to K_{\mathcal{G'}} \to 0. 
\end{equation}
First of all $K_{\mathcal{G'}}$ is trivial since it is isomorphic to the pull-back of the fundamental line bundle that is trivial by the Corollary \ref{17}, so $K_{S^{'}}\simeq K_{\mathcal{G'}}\otimes N_{\mathcal{G'}}^{*}\simeq N_{\mathcal{G'}}^{*}.$ Note that $N_{\mathcal{G'}}^{*}$ is isomorphic to $\pi^*(\Omega^{1}_{C^{'}})\otimes {\mathcal O} (\sum(l_{E}-1)E)$ and the exact sequence (\ref{18}) has the following form:
\begin{equation} \label{14}
0\to \pi^*(\Omega^{1}_{C^{'}})\otimes {\mathcal O}(\sum(l_{E}-1)E) \to \Omega^{1}_{S^{'}}\to \mathcal{O} \to 0.
\end{equation}
The long exact sequence of cohomologies starts as $$0\to H^{0}(\pi^*(\Omega^{1}_{C^{'}})\otimes \sum(l_{E}-1)E) \to H^{0}(\Omega^{1}_{S^{'}})\to H^0(\mathcal{O})\to \operatorname{Ext}^{1}(\mathcal{O},\pi^*(\Omega^{1}_{C^{'}})).$$  

But $h^{0}(\Omega^{1}_{S^{'}})=g(C)+1$ by Lemma \ref{54}(ii), and $h^{0}(\pi^*(\Omega^{1}_{C^{'}})\otimes \sum(l_{E}-1)E)=g(C)$ since $h^{0}(\pi^*(\Omega^{1}_{C^{'}})\otimes \sum(l_{E}-1)E)=h^{0}(N_{\mathcal{G}}^{*})=h^{0}(K_{S^{'}})$ and the last number is equal to $g(C')=g(C)$ by Corollary \ref{17}. The map $H^{0}(\Omega^{1}_{S^{'}})\to H^0(\mathcal{O})$ must then be surjective, hence the exact sequence (\ref{14}) splits.  
\end{proof}

\section{Conclusion by deformation theory} 
Let us denote by $\mathcal{U}$ the universal family over the component $M$ of the Barlet space of $X$ containing the parameter point of $S$,  and by $\varepsilon$ and $\gamma$ the natural maps of $\mathcal U$ to $X$ and $M.$ It is well-known that $M$ is compact 
and $\varepsilon$ and $\gamma$ are proper. The tangent space to $\mathcal{U}$ at a point $(u,S^{'}),$ where $u\in S^{'}$ and $S^{'}$ is smooth, can be described as the first term of the following exact sequence: $$0\to T_{\mathcal{U},{(u,S^{'})}}\to T_{X,{u}}\oplus H^{0}(N_{S^{'}/X})\to (N_{S^{'}/X})_{u}, $$ where the last map sends couple $(v,s),$ where $v\in T_{X_{u}}$ and $s\in H^{0}(N_{S^{'}/X}),$ to $(v\operatorname{mod}{T_{S^{'}}})-s(u).$ The differentials of $\varepsilon$ and $\gamma$ are compositions of the first morphism with the projections to $T_{X_{u}}$ and $H^{0}(N_{S^{'}/X})$ respectively. Note that we can compute the dimension of the kernel of $d\varepsilon $ at every smooth point $(u,S^{'})$: it is equal to the dimension of the space of sections
of $N_{S^{'}/X}$ vanishing at $u$, i.e. $g-1$ if $u$ does not lie on a multiple fiber on $S^{'}$, and $g$ if $u$ does. Consider the case of a point $(u^{'},S^{'})$ of $\mathcal{U}$ such that $u^{'}$ does not lie on a multiple fiber on $S^{'}.$ Let us prove that fibers of $\varepsilon$ over $u^{'}$ are smooth reduced varieties of dimension $g-1$: indeed a fiber has dimension at least $g-1$ since the dimension of $\mathcal{U}$ is equal to $g+3$, so the dimension of the fiber and that of the tangent space to the fiber are both 
equal to $g-1$ by the remark above, so that the fiber is equidimensional, reduced and 
smooth. 
Moreover we get that $d\varepsilon $ is surjective at $(u^{'},S^{'})$ so $\varepsilon$ is a submersion on the preimage $U$ of an analytic neighborhood of $u^{'}$ that does not contain points of multiple fibers on $S^{'}.$ Hence by Ehresmann's lemma $\varepsilon$ is a locally trivial fibration on $U$.
\begin{lemma} \label{31} (cf. \cite{A})
Whenever $L\cap K\ne \emptyset$ where $L,K\in M$ and $L$ is smooth, the intersection $L\cap K$ is equidimensional. That is, since $c_2(N_{L/X})=0$, all irreducible components of $L\cap K$ are curves.
\end{lemma}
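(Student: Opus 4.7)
The plan is to compute the total intersection number $L\cdot K$ in $X$ and then rule out any zero-dimensional component of $L\cap K$ by positivity of local intersection contributions.

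First, since $M$ is irreducible and the cycle-class map on the Barlet space is locally constant, $[L]=[K]$ in $H^{4}(X,\mathbb{Z})$, so $L\cdot K=[L]^{2}$. The self-intersection formula for the smooth submanifold $L\subset X$ gives
$$[L]^{2}=\int_{L}c_{2}(N_{L/X}).$$
Since $L$ is Lagrangian, the symplectic form identifies $N_{L/X}$ with $\Omega^{1}_{L}$, so $c_{2}(N_{L/X})=c_{2}(L)$. Combining Lemma \ref{12} with the invariance of Chern numbers under smooth deformation (both $L$ and $S$ are smooth members of the same irreducible family $M$), one obtains $c_{2}(L)=c_{2}(S)=0$, hence $L\cdot K=0$.

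Next, I would argue by contradiction: suppose some $u\in L\cap K$ is isolated. Since $\dim L+\dim K=4=\dim X$, the intersection is proper at $u$; smoothness of $L$ at $u$ makes the local intersection multiplicity $i_{u}(L,K)$ a strictly positive integer. By the standard positivity of the refined intersection product along the smooth embedding $L\hookrightarrow X$, the remaining irreducible components of $L\cap K$ contribute non-negatively, so $L\cdot K\geq i_{u}(L,K)\geq 1$, contradicting the vanishing above. Therefore every irreducible component of $L\cap K$ has positive dimension; a $2$-dimensional component would mean $L\subset K$, which is excluded under the tacit assumption that $L$ is not itself a component of the cycle $K$. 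Hence all components of $L\cap K$ are curves.

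I expect the main obstacle to be the positivity step: a priori $L\cap K$ may contain both isolated points and positive-dimensional components simultaneously, and one must ensure that the latter cannot contribute negatively so as to cancel the positive local multiplicity at $u$. The standard way around this is the excess intersection formalism for the smooth embedding $L\hookrightarrow X$ (or, equivalently, the theory of analytic intersection multiplicities of Tworzewski), which expresses each component's contribution as the integral of a top Chern class of an effective vector bundle over an irreducible subvariety, hence as a non-negative number.
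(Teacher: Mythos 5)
Your reduction to the vanishing of $\deg(L\cdot K)$ is fine: $[L]=[K]$ in homology, and $\deg(L\cdot K)=\int_L c_2(N_{L/X})=c_2(L)=0$; this is exactly the numerical input the paper also uses. The gap is in the decisive positivity step. In Fulton-style refined/excess intersection theory the contributions of the distinguished varieties do add up to $\deg(L\cdot K)$, but the non-negativity of the contributions of \emph{positive-dimensional} components is not automatic: it requires a positivity hypothesis on the normal bundle (e.g. $N_{L/X}$ generated by global sections, as in Fulton's Theorem 12.2(a)), and without such a hypothesis a curve component can contribute negatively (already for a clean intersection along a smooth curve $Z$ the contribution is $\deg\bigl(N_{L/X}|_Z\bigr)-\deg N_{Z/K}$, which can well be negative). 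Here $N_{L/X}\simeq\Omega^1_L\simeq\mathcal{O}\oplus\bigl(\pi^*\Omega^1_{C}\otimes\mathcal{O}(\sum(l_E-1)E)\bigr)$ is nef but \emph{not} globally generated along the reduced multiple fibers: its restriction there contains a nontrivial torsion line bundle, so every section vanishes on them. And in the critical case $g=1$ the curves along which members of $M$ meet are precisely such multiple fibers, so the positivity you invoke fails exactly where you need it. The appeal to Tworzewski-type analytic intersection indices does not close the gap either: those indices are non-negative by construction, but they are not known (and in general not true) to sum to the homological number $\deg(L\cdot K)$, so they cannot be played off against the vanishing $L\cdot K=0$; nor is each Fulton contribution ``the integral of a top Chern class of an effective bundle'' --- it involves Segre classes of the normal cone, which is where negativity can enter.

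The paper avoids this issue: it uses the genuinely standard positivity only in the case where $L\cap K$ is \emph{purely} zero-dimensional (then the intersection is proper and $\deg(L\cdot K)>0$, contradicting $c_2(N_{L/X})=0$), and it excludes the mixed case (isolated points together with curves) not by intersection theory but by a dimension count on the universal family: every irreducible component of $\varepsilon^{-1}(L)\subset\mathcal{U}$ has dimension $g+1$, no such component can dominate the $(g+1)$-dimensional $M$ (otherwise a general member would meet $L$ in a non-empty finite set, which is the purely zero-dimensional case already excluded), hence every non-empty fiber of $\gamma$ on each component has dimension at least one, i.e.\ $L\cap K$ has no isolated point for \emph{any} $K\in M$. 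If you want to keep a purely intersection-theoretic proof, you would have to supply an actual argument that the curve components contribute non-negatively for this specific nef, non-globally-generated $N_{L/X}$; as written, that step is unsupported.
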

\begin{proof}
Consider $\varepsilon^{-1}(L)$ -- the preimage of $L$ in $\mathcal{U}.$ Consider one of the irreducible components $\widetilde{\varepsilon^{-1}(L)}$ of $\varepsilon^{-1}(L).$ By the remarks above the dimension of $\widetilde{\varepsilon^{-1}(L)}$ is equal to $g+1.$ 
Note that $\gamma$ can not be dominant on $\widetilde{\varepsilon^{-1}(L)}.$ Indeed suppose that $\gamma$ is dominant on $\widetilde{\varepsilon^{-1}(L)}$, then, since the dimension of $M$ is also equal to $g+1,$ the dimension of a general fibre would be zero. If 
some components dominate $M$ and other don't, we pick a point in $M$ outside of images 
of those non-dominating components and obtain that the corresponding surface has purely
 zero-dimensional non-empty intersection with $L$, 
which is impossible since $c_{2}(N_{L/X})=0$ and this is the same as the 
self-intersection number of $L$ in $X$.
 So all components are non-dominating and the dimension of a general fiber is always 
equal to 
$1.$ Since this fiber is exactly the intersection of $L$ and the corresponding surface
from the family, we are done. 
\end{proof}

The following observation is well-known from deformation theory; it is a consequence
of the tubular neighbourhood lemma and the unobstructedness of deformations of 
lagrangian subvarieties.

\begin{obs}\label{smalldefo} 
The locus where a smooth lagrangian surface $L$ from the family $M$
intersects its small deformation is the zero locus of the corresponding section 
$s\in H^0(N_{L/X})\simeq H^0(\Omega^1_L)$. The intersection is transversal wherever 
the section $s$ has no multiple zero.
\end{obs}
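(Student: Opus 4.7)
The plan is to establish this via the holomorphic Lagrangian neighborhood theorem (the complex-analytic analogue of Weinstein's theorem, whose existence in this setting is what makes the deformation theory of Lagrangian subvarieties so rigid; see for instance \cite{V}). This theorem says that a neighborhood of $L$ in $X$ is biholomorphic, via a symplectomorphism, to a neighborhood of the zero section in the total space of $T^*L$ equipped with its canonical symplectic form. Under this identification, $L$ itself becomes the zero section, and the normal bundle $N_{L/X}$ is canonically identified with the bundle whose fibre at $u\in L$ is the cotangent space $T^*_uL$; this is exactly the isomorphism $N_{L/X}\simeq\Omega^1_L$ induced by the symplectic form of $X$.

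With this model in hand, I would argue that any sufficiently small deformation $L'$ of $L$ in $M$ is contained in the tubular neighborhood and, being Lagrangian, is the graph of a holomorphic section $\sigma$ of $T^*L$ satisfying $d\sigma=0$. Because Lagrangian deformations are unobstructed and $H^0(\Omega^1_L)$ is the full tangent space to $M$ at $[L]$, the map sending $L'$ to its defining closed $1$-form $\sigma$ is, up to the identification $H^0(N_{L/X})\simeq H^0(\Omega^1_L)$, the obvious one: $\sigma$ is precisely the section $s$ singled out in the statement. The intersection $L\cap L'$, read inside $T^*L$, is the intersection of the zero section with the graph of $s$, which is visibly the zero locus $\{u\in L: s(u)=0\}$.

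Transversality is then a pointwise linear-algebra check. At a point $u$ with $s(u)=0$, write $T_u(T^*L)=T_uL\oplus T^*_uL$. The tangent space to $L$ is $T_uL\oplus 0$, while the tangent space to the graph of $s$ is $\{(v,ds_u(v)):v\in T_uL\}$. These two planes are transverse inside $T_u(T^*L)$ precisely when the linear map $ds_u:T_uL\to T^*_uL$ is an isomorphism, which is the condition that $u$ be a simple (non-multiple) zero of the $1$-form $s$. This gives the second assertion.

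The only delicate point is the identification of the section associated to a deformation with the one coming from the isomorphism $H^0(N_{L/X})\simeq H^0(\Omega^1_L)$; both amount to reading the first-order normal direction through the symplectic pairing, but making this compatibility precise requires the Lagrangian neighborhood theorem rather than just the ordinary tubular neighborhood theorem. Once that is granted, everything else is formal.
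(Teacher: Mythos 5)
Your overall route is exactly the one the paper has in mind: the authors give no argument beyond invoking the tubular neighbourhood lemma and the unobstructedness of Lagrangian deformations, and your use of the holomorphic Weinstein model (a neighbourhood of $L$ symplectomorphic to a neighbourhood of the zero section of $T^*L$, small deformations becoming graphs of holomorphic $1$-forms, the intersection with $L$ becoming the zero locus) is the standard way of making that precise. The first assertion of Observation \ref{smalldefo} is therefore handled correctly and in the same spirit as the paper.

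The problem is your transversality step. You read ``no multiple zero'' as ``non-degenerate isolated zero'' and prove transversality in the strong sense $T_uL+T_uL'=T_uX$, which indeed requires $ds_u:T_uL\to T^*_uL$ to be an isomorphism. But for the surfaces occurring in this paper that situation never arises: by Lemma \ref{13} one has $\Omega^1_{L}\simeq \mathcal{O}\oplus(\pi^*\Omega^1_{C'}\otimes\mathcal{O}(\sum(l_E-1)E))$ and $c_2(L)=0$, so every nonzero section of $\Omega^1_L$ either vanishes nowhere or vanishes along a divisor (a union of elliptic fibers). At a point $u$ of that zero curve $Z$ the derivative $ds_u$ kills $T_uZ$, hence is never an isomorphism, and under your reading the second sentence of the observation becomes vacuous --- whereas the paper actually uses it, in Proposition \ref{23} and in the $g=1$ analysis, in the form ``at a point of the intersection curve the tangent planes $T_uL$ and $T_uL'$ are distinct'' (two Lagrangian planes sharing a line, not spanning $T_uX$). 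The statement you should extract from your own model is: if $s$ vanishes to order exactly one along a component $Z$ (this is the intended meaning of ``no multiple zero''), then at a general point $u\in Z$ the rank of $ds_u$ is one, so the tangent plane $\{(v,ds_u(v))\}$ of the graph differs from $T_uL\oplus 0$ while both contain $T_uZ$; equivalently the intersection scheme is reduced along $Z$ and the two surfaces are not tangent there. This is a small and fixable misreading rather than a flaw of method, but as written your argument does not deliver the form of transversality that the rest of the paper relies on.
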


We thus have two cases to consider: when the the genus $g$ of the base curves of our
elliptic surfaces from the family $M$ is greater 
than $1$, any elliptic fiber on $L$ is in the zero locus of some section of $\Omega^1_L$
and therefore is a component of the intersection of $L$ with a small deformation;
in the case when $g=1$, such intersections happen only along multiple fibers since
the sections of $\Omega^1_L$ then have no other zeros.

By the same type of reasons, we have the following 

\begin{lemma}\label{dominant}
The map $\varepsilon$ is dominant.
\end{lemma}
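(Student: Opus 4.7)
The plan is to derive dominance directly from the local study of $\varepsilon$ carried out just before Lemma \ref{31}. At a point $(u^{'},S^{'})\in\mathcal U$ where $S^{'}$ is smooth and $u^{'}$ does not lie on a multiple fiber of the elliptic fibration $\pi:S^{'}\to C^{'}$, we already observed that $\mathcal U$ is smooth of dimension $g+3$ and that the kernel of $d\varepsilon$ has dimension $g-1$. Consequently the image of $d\varepsilon$ at such a point has dimension $(g+3)-(g-1)=4=\dim X$, so $d\varepsilon$ is surjective and $\varepsilon$ is a submersion on a neighbourhood of $(u^{'},S^{'})$.

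Such points clearly exist: take $S^{'}=S$ and any $u$ in the (Zariski open) complement of the finitely many multiple fibers on $S$. The submersion property then implies that $\varepsilon(\mathcal U)$ contains an open neighbourhood of $u$ in $X$. On the other hand $M$ is compact and $\gamma$ is proper, so $\varepsilon$ is proper as well; hence $\varepsilon(\mathcal U)$ is a closed analytic subset of $X$. A closed analytic subset of the irreducible complex manifold $X$ that has nonempty interior must coincide with $X$, so $\varepsilon$ is surjective, and in particular dominant.

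In short, no serious obstacle remains: all the deformation- and foliation-theoretic input has been accumulated in the preceding sections, and the dominance of $\varepsilon$ is a formal consequence of the submersion computation together with properness. The only point worth noting is that $\mathcal U$ is smooth of the expected dimension $g+3$ at the chosen point $(u,S)$, which follows from Lemma \ref{54}(i) (giving smoothness of $M$ at $[S]$) combined with the smoothness of $S$ at $u$.
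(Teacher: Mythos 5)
Your argument is correct, and it is non-circular: the submersion statement you invoke (kernel of $d\varepsilon$ of dimension $g-1$ at a point $(u',S')$ with $S'$ smooth and $u'$ off the multiple fibers, $\dim\mathcal U=g+3$, hence $d\varepsilon$ surjective there) is established in the paper in the paragraph preceding Lemma \ref{31}, before Lemma \ref{dominant} is stated, and it rests only on Section 3 (Lemma \ref{13} via Lemma \ref{54} and Corollary \ref{17}). Combined with properness of $\varepsilon$ (image closed analytic by Remmert) and irreducibility of $X$, this indeed gives surjectivity, which is stronger than dominance. The paper's own proof is different in mechanism: it does not use the rank computation at all, but observes that $S\subset D$ deforms away from $D$ already at the infinitesimal level (the sections of $N_{S/X}\simeq\Omega^1_S$ form a $(g+1)$-dimensional space, whereas deformations inside $D$ only account for $H^0(N_{S/D})=H^0(\mathcal O_S)$), so by unobstructedness the irreducible closed image of $\varepsilon$, which already contains the divisor $D$ swept by the fibers of $p$, strictly contains $D$ and hence is all of $X$. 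Your route buys a little more (local openness of $\varepsilon$ near non-multiple-fiber points, outright surjectivity) at the cost of invoking the tangent-space description of $\mathcal U$ and the splitting of $\Omega^1_{S'}$; the paper's route is shorter and purely geometric, using only the normal bundle sequence of $S\subset D\subset X$ and the fact that $D$ is covered by members of $M$. One small point you could make explicit: the identification $\dim T_{(u,S)}\mathcal U=g+3$ is not strictly needed, since the inequality $\dim T_{(u,S)}\mathcal U\ge\dim\mathcal U=g+3$ together with the kernel count already forces the rank of $d\varepsilon$ to be $4$.
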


Indeed the surface $S$ deforms away from the divisor $D$ as it is seen on the 
infinitesimal level, so there is a surface from the
family $M$ through a general point of $X$.

If we assume that $S$ is not a torus, the family of intersections of surfaces 
from $M$ is also dominant:

\begin{lemma}\label{nofibrat}
If $S$ is not a torus, then there is more than one $S$ through a 
general point of $X$.
\end{lemma}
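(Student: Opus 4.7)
The plan is to analyse the evaluation morphism $\varepsilon:\mathcal{U}\to X$ via a dimension count, splitting the argument according to whether $g\ge 2$ or $g=1$. Recall that $\dim\mathcal{U}=g+3$ while $\dim X=4$, and that $\varepsilon$ is dominant by Lemma~\ref{dominant}; the generic fibre of $\varepsilon$ over a point $x\in X$, after projection by $\gamma$, parametrises precisely the surfaces in $M$ passing through $x$, so I only need to show that this fibre is not reduced to a single point.

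When $g\ge 2$ the conclusion is immediate. Here $\dim\mathcal{U}\ge 5>4$, so the generic fibre of $\varepsilon$ has dimension at least $g-1\ge 1$, and through a general point of $X$ there passes a whole positive-dimensional family of surfaces from $M$.

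The case $g=1$ is where I expect the main obstacle. Now $\dim\mathcal{U}=\dim X=4$, so $\varepsilon$ is at worst generically finite, and one must rule out the possibility that it is of degree one. The natural strategy is to argue by contradiction: if $\varepsilon$ were birational, then $\phi:=\gamma\circ\varepsilon^{-1}\colon X\dashrightarrow M$ would be a meromorphic map onto a base of dimension $g+1=2=\tfrac{1}{2}\dim X$ whose general fibre, by Lemma~\ref{54}(ii), is a smooth Lagrangian surface from $M$. Such a meromorphic Lagrangian fibration on an irreducible holomorphic symplectic fourfold has its general fibre forced to be a complex torus, by the Arnold--Liouville argument applied on the open locus where $\phi$ is a proper submersion with Lagrangian fibres (equivalently, by Matsushita's structure theorem, which is the structural input underlying the fibration results of Greb--Lehn--Rollenske, Amerik and Hwang--Weiss recalled in the introduction). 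This would force $S$ itself to be a torus, contrary to hypothesis; hence $\varepsilon$ must have degree strictly greater than one, and more than one surface from $M$ passes through a general point of $X$. The delicate point is precisely this invocation of the structure theorem in the meromorphic setting; for $g\ge 2$ everything is a pure dimension count.
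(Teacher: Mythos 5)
Your $g\ge 2$ case is fine: since $\dim\mathcal{U}=g+3>4$ and $\varepsilon$ is dominant by Lemma~\ref{dominant}, the general $\varepsilon$-fibre is positive-dimensional and injects into $M$ via $\gamma$, so infinitely many members of $M$ pass through a general point. But the lemma is needed for $g=1$ as well, and there your argument has a genuine gap. The paper's proof is a one-line appeal to the Amerik--Campana theorem on meromorphic fibrations of irreducible hyperk\"ahler manifolds \cite{AC-fib}: a rational fibration of $X$ by subvarieties of non-maximal Kodaira dimension must be a fibration in Lagrangian tori, contradicting the fact that the members of $M$ are properly elliptic. That theorem is precisely the tool built to handle the \emph{meromorphic} setting, and it is what your proposal tries to replace by ``Arnold--Liouville on the locus where $\phi$ is a proper submersion'' or ``Matsushita's structure theorem''. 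Neither substitute works as stated: Matsushita's theorem concerns holomorphic fibre spaces, and the results of Greb--Lehn--Rollenske, Amerik and Hwang--Weiss quoted in the introduction go in the opposite direction (they start from a Lagrangian \emph{torus} and produce a fibration); none of them says that the general fibre of a merely meromorphic fibration is a torus.

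Concretely, the missing point is almost-holomorphy/properness. In the hypothetical situation ($g=1$, $S$ not a torus, $\varepsilon$ birational) every member $S'$ of $M$ has multiple fibres (Lemma~\ref{60} with $g=1$ and $\kappa(S')=1$), and by Observation~\ref{smalldefo} together with Lemma~\ref{13} the sections of $N_{S'/X}\simeq\mathcal{O}\oplus\mathcal{O}(\sum(l_E-1)E)$ which do vanish, vanish exactly along the multiple fibres; hence there are members of $M$ meeting $S'$ along those curves (which, as the paper later shows, all lie in $D$). So the general fibre of $\phi=\gamma\circ\varepsilon^{-1}$ cannot avoid the locus where the family fails to be a genuine fibration: the open set on which $\phi$ is a submersion misses a curve in \emph{every} fibre, its fibres there are the non-compact surfaces $S'\setminus\{\text{multiple fibres}\}$, and the restricted map is not proper. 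Arnold--Liouville (equivalently, the argument ``fibre of a local fibration $\Rightarrow$ trivial normal bundle $\Rightarrow$ $\Omega^1_{S'}$ trivial $\Rightarrow$ torus'') therefore does not apply, and indeed Lemma~\ref{13} shows the normal bundle is \emph{not} trivial exactly in this case. You correctly flag this as the delicate point, but flagging it does not fill it; as written the $g=1$ case is unproved, and the fix is either to quote \cite{AC-fib} as the paper does or to supply an argument of comparable depth for non-almost-holomorphic fibrations.
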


\begin{proof} Assume the contrary, then the family $M$ gives a rational fibration of $X$ in subvarieties of non-maximal Kodaira dimension.  By \cite{AC-fib} it must be a fibration on lagrangian tori, a contradiction.
\end{proof}

At a general point of $x\in X$, the general intersections are transversal as 
follows from Sard's lemma. More 
precisely, consider in $\mathcal{U}\times \mathcal{U}$ the complement to the preimage of the diagonal $\Delta_{M}$ in $M\times M,$ let us denote it by $\mathcal{U}\times \mathcal{U}\setminus ((\gamma\times \gamma)^{*}(\Delta_{M})).$ Consider the preimage of $\Delta_{X}$, the diagonal in $X\times X$, in the following diagram:
$$
\begin{CD}
\mathcal{U}\times \mathcal{U}\setminus ((\gamma\times \gamma)^{*}(\Delta_{M})) @>\varepsilon \times \varepsilon>> X\times X \\
@V{\gamma\times \gamma}VV \\
M\times M \setminus (\Delta_{M})
\end{CD}
$$ 
Denote this preimage by $I.$ Over $M\times  M \setminus (\Delta_{M})$,
$I$ is the family of intersections of surfaces from $M.$ At a general point 
$x\in X$, consider the intersection of smooth surfaces corresponding to $m_1$ and $m_2\in M$; the point $((x, m_1), (x, m_2))$ is a general point of $U$. As 
the intersections dominate $X$, one deduces from Sard's lemma that 
$\varepsilon$ 
is submersive at both $(x, m_1)$ and $(x, m_2)$; then $\varepsilon\times \varepsilon$ is submersive at $((x, m_1), (x, m_2))$, so that $I$ is smooth and the intersection is transversal.

With this in mind, let us first treat the case $g>1$.

\begin{proposition} \label{23} Assume $g>1$.
Let $x$ be a general point of $X$ and $S$ a general (smooth) surface from the family $M$ through this point. Let $E$ be the fiber of the elliptic fibration 
on $S$ through $x$. Then any other surface from the family $M$ passing
through $x$ intersects $S$ along $E$ (and nothing else) in the neighbourhood 
of $x$.
\end{proposition}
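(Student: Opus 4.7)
The plan is to establish the proposition in two stages. First, for small deformations $S'$ of $S$, I invoke Observation~\ref{smalldefo} together with the decomposition of $\Omega^1_S$ from Lemma~\ref{13}. Observation~\ref{smalldefo} identifies $S \cap S'$ with the zero scheme $Z(s)$ of the corresponding section $s \in H^0(N_{S/X}) \cong H^0(\Omega^1_S)$, and the condition $x \in S'$ translates to $s(x) = 0$. Lemma~\ref{13} provides a splitting $\Omega^1_S \cong \mathcal{O} \oplus W$ where $W := \pi^*\Omega^1_C \otimes \mathcal{O}(\sum (l_E - 1) E)$; since the trivial summand has only nowhere-vanishing global sections, the vanishing $s(x) = 0$ forces $s \in H^0(W)$. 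Since $x$ is general and not on a multiple fibre, locally $W \cong \pi^*\Omega^1_C$ near $x$, so $s$ is the pullback $\pi^* \bar s$ of a section $\bar s \in H^0(\Omega^1_C)$ with $\bar s(\pi(x)) = 0$. Hence $Z(s)$ contains the entire fibre $E = \pi^{-1}(\pi(x))$, and for a generic $\bar s$ the zero at $\pi(x)$ is simple, yielding $Z(s) = E$ in a neighbourhood of $x$.

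Next, to extend to an arbitrary $S' \in M$ passing through $x$, the key remark is that every effective divisor in $|K_S|$ is a union of elliptic fibres of $\pi$. Indeed, $K_S \cong W$ and $\pi_* W \cong \Omega^1_C$ by the triviality of the fundamental line bundle established in the proof of Lemma~\ref{60}, so sections of $\mathcal{O}(K_S)$ correspond to sections of $\Omega^1_C$ twisted by the fixed multiple-fibre part, and their zero loci are preimages of finite subsets of $C$ augmented by $\sum(l_E-1)E$. It therefore suffices to show that the 1-cycle $S \cap S'$ on $S$ lies in the class of $K_S$ in $\mathrm{Pic}(S)$ for every $S' \in M$ with nonempty intersection. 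I would argue this by deformation-invariance: by Lemma~\ref{31}, $\{S \cap S_t\}$ forms a flat family of 1-dimensional cycles on $S$ parameterised by the appropriate locus of $M$, so its class in $\mathrm{Pic}(S)$ is locally constant; combined with the small-deformation computation $[S \cap S'] = c_1(W) = K_S$, this propagates the identity $[S \cap S'] \sim K_S$ throughout. Hence $S \cap S'$ is a union of elliptic fibres containing $E$; since no other fibre passes through $x$, locally $S \cap S' = E$.

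The main obstacle is the extension step: rigorously establishing the linear equivalence $[S \cap S'] \sim K_S$ uniformly for $S' \in M$ rather than only for small deformations requires care regarding flatness of the universal family of intersection cycles and connectedness of $M_x$. The tangent-space computation $T_{[S]} M_x \cong H^0(\Omega^1_S \otimes \mathcal{I}_x)$ shows $M_x$ is smooth of dimension $g-1$ at $[S]$, which should supply the local irreducibility needed to propagate the small-deformation result to the entire component of $M_x$ through $[S]$.
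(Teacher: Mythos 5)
Your first stage (existence of small deformations $S'$ through $x$ whose intersection with $S$ near $x$ is exactly $E$, cut out transversally when $\bar s$ has a simple zero) is sound and coincides with the opening move of the paper's proof. The genuine gap is in your extension step. The claim that the class of $S\cap S_t$ in $\mathrm{Pic}(S)$ is locally constant in a flat family of divisors on the fixed surface $S$ is false whenever $q(S)>0$: a connected family of divisors only has constant class in the N\'eron--Severi group, while the class in $\mathrm{Pic}(S)$ may move in a translate of $\mathrm{Pic}^0(S)$. Here $q(S)=h^0(\Omega^1_S)=g+1\ge 3$, and the divisors you are dealing with (unions of elliptic fibres) are precisely of the type that do move nontrivially in $\mathrm{Pic}^0$-directions (two fibres $\pi^{-1}(c)$, $\pi^{-1}(c')$ are algebraically but in general not linearly equivalent). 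So the propagation of the linear equivalence $S\cap S'\sim K_S$ from small deformations to all of $M_x$ does not work as stated, quite apart from the flatness and connectedness issues you flag yourself; as written the second stage does not close.

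The strategy is repairable, but only after weakening the target: you do not need $S\cap S'\in |K_S|$, only that every component of $S\cap S'$ is vertical. All members of the irreducible component $M$ are homologous in $X$, and by Lemma~\ref{31} the intersection of the smooth surface $S$ with any cycle $S'\in M$ is proper (pure dimension $1$), so the intersection cycle on $S$ has cohomology class $c_1(N_{S/X})=[K_S]$, which is a sum of fibre classes since $K_S=\pi^*K_C+\sum(l_E-1)E$. Then $[K_S]\cdot F=0$, while any horizontal component would meet the fibre class positively; hence every component of $S\cap S'$ is (the reduction of) a fibre, and the only fibre through the general point $x$ is $E$. This numerical argument bypasses $\mathrm{Pic}$ entirely. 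The paper's own proof is different and more local: it uses only the existence of one small deformation $S'$ meeting $S$ transversally at $x$ (your stage 1, via Observation~\ref{smalldefo}), and then, assuming some $L\in M$ through $x$ meets $S$ along a curve other than $E$, derives a contradiction from symplectic linear algebra: the three Lagrangian planes $T_xS$, $T_xS'$, $T_xL$ would intersect pairwise in three distinct lines, each of which must be $\omega$-orthogonal to the three-space they span, which is impossible. That argument needs no global divisor-class considerations at all.
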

\begin{proof}
There are certainly surfaces satisfying this condition, namely suitable 
small deformations of $S$ mentioned in observation \ref{smalldefo}. Take one of them, say $S'$,
intersecting $S$ transversally at $x$.
Suppose there are other intersection curves through $x$; consider a general
$L$ from $M$ such that the intersection component with $S$ is not $E$.
We apply the same argument as in \cite{A}: three tangent planes $T_xS$, $T_xL$,
$T_xS'$ intersect along three distinct lines and generate a three-space $H_x$ in 
$T_xX$; but these are lagrangian planes so we get a contradiction (each of 
the intersection lines must then be orthogonal to $H$ with respect to the 
symplectic form).
\end{proof}

\begin{corollary} 
The case $g>1$ is impossible.
\end{corollary}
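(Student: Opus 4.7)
The plan is to study the subfamily $M_E \subset M$ of surfaces that contain the elliptic fibre $E$ through a general point $x \in X$, and to use Proposition~\ref{23} to bound its dimension via a tangent-plane argument.

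The first step is to compute $\dim M_E$. Every surface in $M$ through $x$ in fact lies in $M_E$: by Proposition~\ref{23} such a surface $L$ meets a fixed $S \in M$ through $x$ along $E$ near $x$, so $L \cap S$ is an analytic subset of $S$ containing an open piece of $E$, and therefore contains all of $E$ by irreducibility and compactness. Thus $M_E$ is exactly the fibre of $\varepsilon$ over $x$, of dimension $g-1$. This matches the infinitesimal picture coming from Lemma~\ref{13}: under the isomorphism $N_{L/X} \simeq \Omega^1_L$, sections vanishing on a smooth $\pi$-fibre $E$ are precisely the pull-backs $\pi^*\alpha$ with $\alpha \in H^0(\Omega^1_{C'})$ vanishing at $\pi(E)$, a space of dimension $g-1$.

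The second step is the linear-algebraic core. For two general $L, L' \in M_E$, Proposition~\ref{23} gives $L \cap L' = E$ near $x$, so $T_x L \cap T_x L' = T_x E$. Both tangent planes are Lagrangian and contain the isotropic line $T_x E$, so both sit inside the 3-plane $(T_x E)^\perp \subset T_x X$. The Lagrangian 2-planes in $(T_x E)^\perp$ containing $T_x E$ correspond to lines in the 2-dimensional quotient $V_x := (T_x E)^\perp / T_x E$, hence form a $\mathbb{P}^1$. The resulting Gauss map $\phi \colon M_E \to \mathbb{P}(V_x) \cong \mathbb{P}^1$, $L \mapsto [T_x L / T_x E]$, is injective on a Zariski-open subset; therefore $\dim M_E \leq 1$, so $g \leq 2$ and the case $g \geq 3$ is settled at once.

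The case $g=2$ is the main obstacle, since the bound is tight. For this I would consider the hypersurface $Y_E := \bigcup_{L \in M_E} L \subset X$. Since $\phi$ is now birational onto $\mathbb{P}^1$, the tangent planes $T_y L$ for the $L \in M_E$ through $y \in E$ sweep out the full 3-plane $(T_y E)^\perp$, so $Y_E$ is an irreducible divisor with $T_y Y_E = (T_y E)^\perp$ at every $y \in E$. A computation using the splitting in Lemma~\ref{13} then identifies the null direction of the characteristic foliation of $Y_E$ at a smooth point $y$ lying on a single $L$ with the tangent to the $\pi$-fibre $E(y)$, so the leaves of the characteristic foliation on $Y_E$ are exactly the compact elliptic curves $E(y)$. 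The characteristic foliation is thus algebraic with non-rational leaves, contradicting the theorem of Hwang--Viehweg and Amerik--Campana (see \cite{AC}) on smooth divisors in an irreducible holomorphic symplectic fourfold. The technical point that needs care is the possible singularity of $Y_E$: one should either argue that $\phi$ being of degree one forces $Y_E$ to be smooth at a generic point (a general $y \in Y_E$ lies on a unique $L$, and locally $Y_E$ is swept smoothly by the 1-parameter family of $L$'s through $y$), or pass to a resolution and track the characteristic foliation.
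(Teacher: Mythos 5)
Your reduction to the family $M_E$ of surfaces containing the fibre $E$ and the identification $\dim M_E=g-1$ are fine, but the Gauss-map step that is supposed to kill $g\ge 3$ does not work. Proposition~\ref{23} is a purely set-theoretic statement: from $L\cap L'=E$ near $x$ you cannot conclude $T_xL\cap T_xL'=T_xE$, since the two surfaces may be tangent along $E$. The transversality the paper extracts from Sard's lemma holds only for \emph{general} pairs in $M\times M$, and that is not enough: it rules out $\phi$ being constant, but not $\phi$ having positive-dimensional fibres, and only generic finiteness of $\phi$ would give $\dim M_E\le 1$. Worse, your own infinitesimal description of $M_E$ (sections $\pi^*\alpha$ with $\alpha(\pi(x))=0$) shows that the differential of $\phi$ at $[L]$ kills exactly the sections with $\alpha$ vanishing to order $\ge 2$ at $\pi(x)$, a space of dimension $h^0(K_{C'}-2\pi(x))=g-2$; so for $g\ge 3$ the Gauss map necessarily has $(g-2)$-dimensional fibres and is never generically injective. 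Hence the claimed bound $g\le 2$ is not established, and the argument as written gives no bound on $g$ at all.

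The $g=2$ branch also has a genuine gap: the divisor $Y_E=\bigcup_{L\in M_E}L$ is singular along $E$ (every member of $M_E$ contains $E$), while the Hwang--Viehweg and Amerik--Campana theorems you invoke concern the characteristic foliation on a \emph{smooth} divisor (and \cite{AC} moreover assumes projectivity, which is not part of the hypotheses here). Smoothness at a generic point does not suffice for those results, and after a resolution the induced foliation is no longer the characteristic foliation of a hypersurface in a symplectic manifold, so no contradiction is obtained. The paper's own route is much shorter and avoids both problems: Proposition~\ref{23}, applied with the roles of two general surfaces through $x$ interchanged, shows that the elliptic fibre through a general point $x$ is independent of the surface chosen, so $X$ would be rationally fibered in elliptic curves; this is impossible by \cite{AC-fib}, since fibres of a meromorphic fibration on an irreducible holomorphic symplectic fourfold must be Lagrangian surfaces. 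If you want to salvage your plan, you would need either an honest proof that $\phi$ is generically finite (which the geometry does not give for $g\ge3$), or, better, to replace both cases by the uniqueness-of-the-fibre argument above.
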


\begin{proof}
Indeed we see that all surfaces passing through the general point $x\in X$ intersect along the elliptic fiber 
of one of them; $X$ must then be rationally fibered in elliptic curves, as there is thus only one such a fiber
through $x$. This is 
impossible by \cite{AC-fib}.
\end{proof}

It remains to rule out the case when $g=1$ but $S$ is not a torus, that is, $\pi:S\to C$ has multiple fibers. The difficulty is now that, as we shall see,
 the multiple fibers of all our surfaces are contained in the divisor $D$; on
the other hand, $S$ now intersects its small deformation $S'$ necessarily along
a multiple fiber, so such an intersection is not generic. To deal with this,
we go into more details on deformations of elliptic fibers.

So consider the case where $S$ is an elliptic surface with an elliptic base curve and at least one multiple fiber. By lemma \ref{13} for a smooth surface $S'$ from the family $M$ we have $\Omega^{1}_{S^{'}}\simeq {\mathcal O}(\sum(l_{E}-1)E)\oplus \mathcal{O},$ so $M$ is 2-dimensional and the surfaces from $M$ still cover $X.$ Denote by $\mathcal{D}(E)$ the irreducible component of the Douady space that contains a fiber $E$ of the elliptic fibration of $S$ and by $\mathcal{D}(E)_{mul}$ an irreducible component of the Douady space that contains the reduction of a multiple fiber of $S$ (all fibers are in the same component, but there might
be several components parameterizing the reductions of fibers, e.g. of 
different 
multiplicities). 
\begin{lemma} \label{90}
$\mathcal{D}(E)$ is 3-dimensional and the normal bundle $N_{E/X}$ for each
(possibly multiple) fiber $E$ of the elliptic fibration of a surface $S'$ from $M$ is isomorphic to $\mathcal{O}\oplus \mathcal{O}\oplus \mathcal{O}.$ In particular locally in a neighbourhood $U$ of $E$ one has a fibration of $U$ by small deformations of $E.$
\end{lemma}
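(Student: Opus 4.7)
The strategy is to compute $N_{E/X}$ via the conormal sequence for the flag $E\subset S'\subset X$, to force triviality by a section count combined with Atiyah's classification of bundles on an elliptic curve, and to extract the dimension of $\mathcal{D}(E)$ and the local fibration from there.

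I would begin with the exact sequence
\[
0\to N_{E/S'}\to N_{E/X}\to N_{S'/X}|_E\to 0
\]
and rewrite the rightmost term using the Lagrangian identification $N_{S'/X}\simeq\Omega^1_{S'}$ and the splitting $\Omega^1_{S'}\simeq\mathcal{O}_{S'}\oplus K_{S'}$ from Lemma~\ref{13} (recall $g=1$, so $\pi^*\Omega^1_{C'}$ is trivial). When $E$ is a smooth fiber, the canonical section of $K_{S'}=\mathcal{O}(\sum(l_F-1)F)$ does not vanish along $E$, so $K_{S'}|_E\simeq\mathcal{O}_E$ and $N_{S'/X}|_E\simeq\mathcal{O}_E^{\oplus 2}$; together with $N_{E/S'}\simeq\mathcal{O}_E$, this reduces the sequence to $0\to\mathcal{O}_E\to N_{E/X}\to\mathcal{O}_E^{\oplus 2}\to 0$.

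Next I would produce three linearly independent global sections of $N_{E/X}$. The two sections of $\Omega^1_{S'}$ provided by Lemma~\ref{54}(ii) correspond to infinitesimal deformations of $S'$ inside the two-dimensional family $M$; each such deformation carries $E$ along via the elliptic fibration, and the composition of the resulting map $T_{[S']}M\to H^0(N_{E/X})$ with the projection $H^0(N_{E/X})\to H^0(N_{S'/X}|_E)$ is the restriction $H^0(\Omega^1_{S'})\to H^0(\Omega^1_{S'}|_E)$, which is an isomorphism $\mathbb{C}^2\to\mathbb{C}^2$ in the smooth-fiber case. Sliding $E$ along the base curve $C'$ within $S'$ yields a third section in $H^0(N_{E/S'})=\mathbb{C}$, lying in the kernel of this projection. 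Hence $h^0(N_{E/X})\geq 3$, while the exact sequence supplies the matching bound $h^0\leq 1+2=3$. Since $\det N_{E/X}\simeq\mathcal{O}_E$ by adjunction (using $K_X\simeq\mathcal{O}$ and $K_E\simeq\mathcal{O}$), Atiyah's classification of bundles on an elliptic curve forces $N_{E/X}\simeq\mathcal{O}_E^{\oplus 3}$. The tangent-space equality $T_{[E]}\mathcal{D}(E)=H^0(N_{E/X})=\mathbb{C}^3$, combined with the three-parameter geometric family over pairs $(S',c)\in M\times C'$, then gives $\dim\mathcal{D}(E)=3$ with $\mathcal{D}(E)$ smooth at $[E]$; triviality of $N_{E/X}$ makes the universal family over $\mathcal{D}(E)$ submersive near $E$, and Ehresmann yields the desired local fibration of $U$.

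The main obstacle is the multiple-fiber case: when $E$ is the reduction of a fiber of multiplicity $l_E\ge 2$, the canonical section of $K_{S'}$ vanishes along $E$ to order $l_E-1$, so $H^0(K_{S'})\to H^0(K_{S'}|_E)$ is zero, the direct section count from $M$ collapses to a single section, and $N_{E/S'}$ is a priori only an $l_E$-torsion line bundle on $E$, not necessarily trivial. I would complete this case by propagating the local fibration built near smooth fibers across the multiple-fiber locus: using that the universal family $\mathcal{U}(\mathcal{D}(E))\to X$ is proper and dominates $X$, one argues that the fiber of this map through a general point of $E$ is a compact connected elliptic curve lying in $S'$ near that point, hence coincides with $E$ itself; this identifies $E$ as the reduced fiber of a local smooth fibration of $X$ near $E$ and automatically yields $N_{E/X}\simeq\mathcal{O}_E^{\oplus 3}$, in particular forcing $N_{E/S'}\simeq\mathcal{O}_E$.
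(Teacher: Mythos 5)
Your treatment of the non-multiple fibers is essentially the paper's own proof: the same sequence $0\to N_{E/S'}\to N_{E/X}\to N_{S'/X}|_E\to 0$ with both outer terms trivial, a lower bound $h^0(N_{E/X})\ge 3$ (the paper gets it from $\dim\mathcal{D}(E)\ge 3$, you get it by exhibiting the three sections coming from $M$ and from sliding $E$ inside $S'$ --- same substance), the upper bound $h^0\le 3$ from the sequence, hence splitting (your determinant-plus-Atiyah step is an acceptable variant of the direct lifting-of-sections argument), and the local fibration via submersivity of the evaluation map. Up to that point there is nothing to object to.

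The problem is your final paragraph on the reduction of a multiple fiber: the conclusion you reach there is false, not just unproved. If $E$ is the reduction of a fiber of multiplicity $l_E\ge 2$, then $N_{E/S'}$ is a torsion line bundle of exact order $l_E$ and $N_{S'/X}|_E\simeq\mathcal{O}\oplus N_{E/S'}^{-1}$ (this is Friedman--Morgan III.8.3, which the paper invokes in the very next lemma), so $h^0(N_{E/X})\le 1$; consequently $E$ moves in at most a one-dimensional family (the paper's $\mathcal{D}(E)_{mul}$, which moreover stays inside $D$), its normal bundle in $X$ cannot be $\mathcal{O}^{\oplus 3}$, and no neighbourhood of $E$ is fibered by small deformations of $E$ itself. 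The specific step that breaks is ``the fiber of $\mathcal{U}(\mathcal{D}(E))\to X$ through a general point of $E$ \ldots coincides with $E$ itself'': the flat (or Barlet) limit of the general fibers along $S'$ is the full multiple fiber $l_E\cdot E$, whose degree against a K\"ahler class is $l_E$ times that of $E$, so the reduced curve $E$ is not a member of $\mathcal{D}(E)$ at all; properness of the universal family only produces that non-reduced limit, and nothing about $N_{E/S'}$ follows --- in particular you cannot ``force $N_{E/S'}\simeq\mathcal{O}_E$''. Note that the paper's own proof of this lemma silently covers only the fibers with $N_{E/S'}\simeq\mathcal{O}$, i.e.\ the non-multiple ones; the reduced multiple fibers are deliberately treated by a separate argument in the following lemma precisely because their normal bundles involve non-trivial torsion, and the rigidity statement proved there (a one-dimensional family confined to $D$) is what the final contradiction actually uses, rather than a trivial normal bundle at the multiple fiber.
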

\begin{proof}
Consider the exact sequence
$$0\to N_{E/S^{'}}\to N_{E/X}\to N_{S^{'}/X}|_{E}\to 0,$$
we have $N_{E/S^{'}}\simeq \mathcal{O}$ and $N_{S^{'}/X}|_{E}\simeq \mathcal{O}\oplus \mathcal{O},$ so
\begin{equation} \label{34}
0\to \mathcal{O} \to  N_{E/X}\to \mathcal{O}\oplus \mathcal{O} \to 0
\end{equation}
Furthermore $h^{0}( N_{E/X})$ is the tangent 
 to $\mathcal{D}(E)$ and must therefore be at least three-dimensional; 
we conclude that $h^{0}( N_{E/X})=3$ and the exact sequence \ref{34} splits.
The last statement follows from the standard calculation of the differential 
of the
evaluation map from the universal family of elliptic curves to $X$.
\end{proof}
\begin{lemma}
$\mathcal{D}(E)_{mul}$ is 1-dimensional and all elliptic curves from the family $\mathcal{D}(E)_{mul}$ are contained in 
$D,$ more precisely they are fibers of the elliptic surfaces from our family $M$ contained in $D$. 
\end{lemma}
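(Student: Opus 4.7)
My plan is to compute $h^0(N_{E_0/X})$ and $h^0(N_{E_0/D})$ where $E_0$ is the reduction of a multiple fiber of $\pi:S\to C$ of multiplicity $l=l_{E_0}>1$, show both equal one, and conclude everything from there. The crucial input, without which the argument collapses, is the standard torsion identity: $\mathcal{L}:=N_{E_0/S}=\mathcal{O}_S(E_0)|_{E_0}$ is a nontrivial torsion line bundle of order exactly $l$ on the elliptic curve $E_0$, because $lE_0$ is a scheme-theoretic fiber of $\pi$.

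For the upper bound $\dim\mathcal{D}(E)_{mul}\le 1$, I would restrict the decomposition of Lemma \ref{13} to $E_0$: since $g=1$ makes $\pi^*\Omega^1_{C'}$ trivial, and since reductions of multiple fibers are pairwise disjoint, I get $N_{S/X}|_{E_0}\simeq \mathcal{O}\oplus \mathcal{L}^{-1}$ (using $\mathcal{L}^{l-1}=\mathcal{L}^{-1}$). The normal bundle sequence
$$0\to \mathcal{L}\to N_{E_0/X}\to \mathcal{O}\oplus \mathcal{L}^{-1}\to 0$$
then forces $h^0(N_{E_0/X})\le 1$, because $\mathcal{L}^{\pm 1}$ are nontrivial of degree zero on the elliptic curve $E_0$. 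For the matching lower bound I would use the map $b\mapsto [E_{0,b}]$, assigning to $b\in B$ the multiple fiber of $S_b=p^{-1}(b)$ deforming $E_0$: this is nonconstant since the $S_b$ are pairwise disjoint by Proposition \ref{regular}.

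Running the analogous computation inside $D$: the sequence $0\to N_{E_0/S}\to N_{E_0/D}\to N_{S/D}|_{E_0}\to 0$ becomes $0\to\mathcal{L}\to N_{E_0/D}\to \mathcal{O}\to 0$ (since $N_{S/D}$ is trivial as $S$ is a fiber of $p$) and splits because $H^1(\mathcal{L})=0$ on an elliptic curve. So $h^0(N_{E_0/D})=1$ and the canonical inclusion $H^0(N_{E_0/D})\hookrightarrow H^0(N_{E_0/X})$ is an isomorphism of 1-dimensional spaces, meaning every infinitesimal deformation of $E_0$ in $X$ is already a deformation inside $D$. Since containment in $D$ is a closed condition on $\mathcal{D}(E)_{mul}$ holding on a neighbourhood of $[E_0]$, it propagates to the whole irreducible 1-dimensional component. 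The final clause---that every such $E'$ is a fiber of some $S_b\subset D$---then follows by dimension count, since the nonconstant map $B\to \mathcal{D}(E)_{mul}$ from the previous paragraph must be surjective (up to taking closures). The one delicate point in this scheme is really the torsion calculation: a careless restriction of the ``trivial'' summand in the decomposition of $\Omega^1_S$ would give $h^0=2$ on both sides, which would not be enough to force deformations to stay in $D$.
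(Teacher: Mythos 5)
Your proposal is correct in substance, and its core coincides with the paper's: the bound $\dim\mathcal{D}(E)_{mul}\le 1$ is obtained in the paper from exactly your sequence $0\to N_{E_0/S'}\to N_{E_0/X}\to N_{S'/X}|_{E_0}\to 0$, with $N_{E_0/S'}$ torsion of order $l$ and $N_{S'/X}|_{E_0}\simeq\mathcal{O}\oplus\mathcal{L}^{-1}$ (the paper quotes Friedman--Morgan where you restrict the decomposition of Lemma \ref{13}), and the lower bound comes, as in your second paragraph, from the one-parameter family of multiple fibers of the surfaces $S_b\subset D$. Where you genuinely diverge is the containment in $D$: the paper deduces it from the identity $\mathcal{O}_{S'}(D)\simeq K_{S'}\simeq\mathcal{O}(\sum(l_E-1)E)$ together with $h^0(K_{S'})=1$, so that for \emph{every} smooth $S'\in M$ the divisor $D\cap S'$ is literally the sum of multiple fibers; this is stronger than the lemma and is what the paper invokes later in the $g=1$ endgame (the multiple fiber $F$ of a surface $L$ through a general point of $X$ lies in $D$), whereas your argument only covers curves parameterized by $\mathcal{D}(E)_{mul}$ --- enough for the statement as posed, but not for that later use. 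One step of your route is also shakier than you present it: the isomorphism $H^0(N_{E_0/D})\simeq H^0(N_{E_0/X})$ is first-order information and does not by itself give containment in $D$ on a neighbourhood of $[E_0]$ (a priori the Douady space of $D$ could be non-reduced and zero-dimensional there while that of $X$ is a curve). What makes your conclusion valid is material you already have: $h^0(N_{E_0/X})\le 1$ forces the germ of $\mathcal{D}(E)_{mul}$ at $[E_0]$ to be a smooth curve germ, and the actual family of multiple fibers of the $S_b\subset D$ passes through $[E_0]$ and therefore fills that germ; closedness of the condition and irreducibility of the component then finish the argument --- and once this is said, the computation $h^0(N_{E_0/D})=1$ is dispensable, contrary to your framing of it as the crux. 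For the final clause the paper argues via $c_2=0$ and Lemma \ref{31} (a curve of $\mathcal{D}(E)_{mul}$ in $D$ but not inside a fiber of $p$ would create zero-dimensional intersection components of surfaces from $M$), while you use surjectivity, up to closure, of $B\dashrightarrow\mathcal{D}(E)_{mul}$; both are at the same level of rigor, each glossing over possibly singular fibers of $p$.
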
 \label{35}
\begin{proof} All multiple fibers are in $D$ since by an easy calculation, ${\mathcal O}_{S'}(D)={\mathcal O}(\sum_E(m(E)-1)E)$, where $m$ stands for multiplicity. They must coincide with fibers of the elliptic surfaces in $D$ since otherwise we would have surfaces from $M$ intersecting at points and not at curves, contradicting $c_2(S')=0$.
Let $E_{m}$ be the reduction of a fiber of multiplicity $m$ on $S^{'}$. Consider the exact sequence $$0\to N_{E_{m}/S^{'}}\to N_{E_{m}/X}\to N_{S^{'}/X}|_{E_{m}}\to 0.$$ Since $N_{E_{m}/S^{'}}$ is a torsion bundle of order $m$
(see e.g. \cite{FM}[Lemma III.8.3 ]) and $N_{S^{'}/X}|_{E_{m}}$ is isomorphic to the direct sum of the trivial bundle and the inverse of that 
torsion bundle of order $m$ (see \cite{FM}[Lemma III.8.3 ]), we get that $\operatorname{dim}(\mathcal{D}(E)_{mul}) \le 1$. Since multiple fibers of smooth surfaces from $D$ already form a 1-dimensional family, we get that  $\operatorname{dim}(\mathcal{D}(E)_{mul}) = 1$.
\end{proof}
\begin{corollary} \label{36} 
A smooth curve from $\mathcal{D}(E)$ either does not intersect $D$ or lies in $D.$ In particular any smooth surface from $M$ intersects any surface from $D$ by fibers of the corresponding elliptic fibrations. 
\end{corollary}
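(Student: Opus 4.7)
My plan is to reduce both assertions to the intersection-theoretic fact that every smooth curve $F\in\mathcal{D}(E)$ satisfies $F\cdot_X D=0$. I would first verify this on a reference smooth surface $S'\in M$ with $S'\not\subset D$: the identification $\mathcal{O}_{S'}(D)\simeq\mathcal{O}\bigl(\sum(m(E)-1)E\bigr)$ noted in the proof of Lemma~\ref{35} shows that $D|_{S'}$ is supported on the multiple fibers of $\pi\colon S'\to C'$. Since distinct fibers of $\pi$ are disjoint and every fiber has zero self-intersection on the smooth elliptic surface $S'$, I obtain $F\cdot_X D=F\cdot_{S'}D|_{S'}=0$ for any fiber $F$ of $\pi$. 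Invariance of the degree $\deg\bigl(\mathcal{O}_X(D)|_F\bigr)$ in the connected component $\mathcal{D}(E)$ then extends this vanishing to every member of that component.

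The first assertion is now immediate from positivity of local intersection multiplicities: if a smooth $F\in\mathcal{D}(E)$ meets $D$ without being contained in $D$, the intersection is proper and each point of $F\cap D$ contributes a strictly positive integer to $F\cdot_X D$, contradicting the vanishing above. Hence either $F\cap D=\emptyset$ or $F\subset D$.

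For the second assertion I take a smooth $S'\in M$ and a surface $S_0\in M$ contained in $D$. By Lemma~\ref{31} the intersection $S'\cap S_0$ is purely one-dimensional, and every irreducible component $F$ lies in the support of $D|_{S'}$, so $F$ is the reduction of a multiple fiber of $\pi\colon S'\to C'$; in particular $F\in\mathcal{D}(E)_{mul}$. Lemma~\ref{35} then identifies $F$ with a fiber of the elliptic fibration on some $S_0'\in M$ with $S_0'\subset D$. Since $p\colon D\to C$ is regular by Proposition~\ref{regular}, the surfaces from this one-parameter subfamily of $M$ are pairwise disjoint, so $F\subset S_0\cap S_0'$ forces $S_0'=S_0$, and $F$ is a fiber of $S_0$ as well. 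The step requiring most care is the transfer of $F\cdot_X D=0$ from fibers of a single elliptic surface to all smooth members of $\mathcal{D}(E)$; once this deformation invariance is in place, the remainder is a clean assembly of Lemmas~\ref{31}, \ref{35} and Proposition~\ref{regular}.
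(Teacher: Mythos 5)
Your proof is correct, and it takes a mildly different route from the paper's on the first assertion. The paper gets the key vanishing $D\cdot E=0$ geometrically: by Lemma \ref{90} the curves of $\mathcal{D}(E)$ fibre a neighbourhood of $E$, so a general small deformation of an elliptic curve lying in $D$ moves off $D$, and then a curve meeting $D$ properly would contradict positivity of intersection, exactly as in your second step. You instead compute the degree directly, restricting $\mathcal{O}_X(D)$ to a smooth $S'\not\subset D$ and using the identification $\mathcal{O}_{S'}(D)\simeq\mathcal{O}(\sum(m(E)-1)E)$ from the proof of Lemma \ref{35}, then invoking constancy of the class (hence of $\deg(\mathcal{O}_X(D)|_F)$) over the connected component $\mathcal{D}(E)$; this makes explicit a numerical fact the paper leaves implicit and does not need the local-fibration statement of Lemma \ref{90}, at the price of relying on the rigidity of the divisor $\sum(m(E)-1)E$ (i.e.\ that $D\cap S'$ is supported exactly on the multiple fibres), which the paper also uses in Lemma \ref{35}. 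For the ``in particular'' part both arguments are essentially the same assembly: equidimensionality of intersections ($c_2=0$, Lemma \ref{31}) plus the identification of curves of $\mathcal{D}(E)_{mul}$ with fibres of surfaces in $D$ (Lemma \ref{35}); your extra step using regularity of $p$ and disjointness of its fibres to force $S_0'=S_0$ is fine, with the small caveat that it tacitly takes ``surface from $D$'' to mean a fibre of $p$ (which is indeed the intended meaning), whereas you introduced $S_0$ as an arbitrary member of $M$ contained in $D$.
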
 
\begin{proof} A general small deformation of an elliptic curve in $D$ does not intersect $D$, so the intersection number
$DE$ is zero. An elliptic curve from $\mathcal{D}(E)$ contained in $D$ must be a fiber of some $S$ by the same reason as in the preceding lemma.

\end{proof}

Now we are ready to finish the proof of our main result. 

\medskip

{\it End of proof of the main theorem in the case $g=1$:} As we have already seen studying the case $g>1$, to get a contradiction
it is sufficient to show that all surfaces from $M$ passing through a general $x\in X$ intersect at the
same elliptic curve $E$ which is the fiber of the elliptic fibration on each of them. 
Suppose that this is not true: the intersection of two smooth surfaces $L$ and $S$ from $M$ contains a curve 
$C$ that is not a fiber of the corresponding elliptic fibrations. We claim that $L$ and $S$ share a multiple 
fiber (which automatically intersects the multisection $C$ of $L$). Indeed let $F$ be a multiple fiber of $L$ and 
let $c$ be a point of intersection of $F$ and $C.$ We have seen that $F$ is contained in $D$. Consider the 
elliptic curve $E$ on $S$ that contains $c.$ Since $F\subset D$, $E$ intersects $D$ and by the Corollary \ref{36} 
$E\subset D,$ i.e. $E$ is equal to $F.$ 

But this is impossible since no neighbourhood of $E$ is then fibered in elliptic curves from $\mathcal{D}(E)$
(as some curves around $E$, those lying on $L$ and those lying on $S$, must intersect at the points of $C$). 

\medskip


{\bf Acknowledgement} The first-named author was partially supported by an RSCF grant 14-21-00053 within the Laboratory of Algebraic Geometry at the National Research University Higher School of Economics.


\begin{thebibliography}{} 

\bibitem[A]{A} E. Amerik. A remark on a question of Beauville about lagrangian fibrations, Moscow Math. J. 12 (2012), no. 4, 701 -- 704. 


\bibitem[AC1]{AC-fib} E. Amerik, F. Campana: Fibrations m\'eromorphes sur certaines vari\'et\'es \`a fibr\'e canonique trivial, Pure Appl. Math. Quarterly 4 no. 2 (special issue in honour of F. Bogomolov), 1-37
\bibitem[AC2]{AC} E. Amerik, F. Campana. Characteristic foliation on non-uniruled smooth divisors on projective hyperkaehler manifolds, arXiv:1405.0539, to appear at J. London 
Math. Soc.

\bibitem[BPV]{BPV} W. Barth, C. Peters, A. Van de Ven, Compact Complex Surfaces (Springer, Heidelberg, 1984)



\bibitem[B1]{Br1} M. Brunella. "Feuilletages holomorphes sur les surfaces complexes compactes." Annales scientifiques de l'\'Ecole Normale Sup\'erieure 30.5 (1997): 569-594. 
\bibitem[B2]{Br-book} M. Brunella. "Birational Geometry of Foliations", IPMA Monographs, 1 (2015), pp. 130
\bibitem[C]{C} F. Campana. Isotrivialit\'e de certaines familles k{\"a}hl\'eriennes de vari\'et\'es non projectives.
Math. Z., 252(1):147--156, 2006

\bibitem[GLR]{GLR} D. Greb, C. Lehn, S. Rollenske: Lagrangian fibrations on hyperk{\"a}hler manifolds--on a question of Beauville, Ann. Sci. ENS (4) 46 (2013), no. 3, 375--403 (2013). 

\bibitem[FM]{FM}  Robert Friedman, John W. Morgan, Smooth Four-Manifolds and Complex Surfaces (Springer, Heidelberg, 1994)

\bibitem[HM]{HM} J. Harris, I. Morrison: Moduli of curves, Springer-Verlag, New York, 1998.

\bibitem[HV]{HV} J.-M. Hwang, E. Viehweg: Characteristic foliation on a hypersurface of general type in a projective symplectic manifold.  Compos. Math. 146 (2010), no. 2, 497--506. 
\bibitem[HW]{HW} J.-M. Hwang, R. Weiss: Webs of lagrangian tori in projective symplectic manifolds, Invent. Math. 192 (1) (2013) 83--109

\bibitem[S]{S} Siu, Y.-T. (2002), "Extension of twisted pluricanonical sections with plurisubharmonic weight and invariance of semi-positively twisted plurigenera for manifolds not necessarily of general type", Complex geometry (Gottingen, 2000), Berlin: Springer-Verlag, pp. 223-277


\bibitem[V]{V} C. Voisin. Sur la stabilit\'e des sous-vari\'et\'es lagrangiennes des vari\'et\'es symplectiques holomorphes. In Complex projective geometry (Trieste, 1989/Bergen, 1989), volume 179 of London Math. Soc. Lecture Note Ser., pages 294--303. Cambridge Univ. Press, Cambridge, 1992 
\end{thebibliography}
\end{document}